\newcommand{\Z}{\mathbb{Z}}
\newcommand{\Q}{\mathbb{Q}}
\newcommand{\F}{\mathbb{F}}
\newcommand{\fA}{\mathfrak{A}}
\newcommand{\OO}{\mathcal{O}}
\newcommand{\II}{\mathcal{I}}
\newcommand{\JJ}{\mathcal{J}}
\newcommand{\CC}{\mathcal{C}}
\newcommand{\PP}{\mathcal{P}}
\newcommand{\cA}{\mathcal{A}}
\newcommand{\cB}{\mathcal{B}}
\newcommand{\SFitt}[1]{\Fitt^{[#1]}}
\newcommand{\SFittN}[1]{\Fitt^{\langle #1 \rangle}}
\newcommand{\wtil}[1]{\widetilde{#1}}
\newcommand{\ol}[1]{\overline{#1}}
\newcommand{\parenth}[1]{\left( #1 \right)}
\DeclareMathOperator{\Gal}{Gal}
\DeclareMathOperator{\Coker}{Cok}
\DeclareMathOperator{\Ker}{Ker}
\DeclareMathOperator{\Imag}{Im}
\DeclareMathOperator{\Fitt}{Fitt}
\DeclareMathOperator{\row}{row}
\DeclareMathOperator{\column}{column}
\DeclareMathOperator{\pd}{pd}
\DeclareMathOperator{\ram}{ram}
\DeclareMathOperator{\fin}{fin}
\DeclareMathOperator{\Cl}{Cl}
\DeclareMathOperator{\ord}{ord}
\DeclareMathOperator{\Hom}{Hom}
\DeclareMathOperator{\rank}{rank}
\let\oldenumerate\enumerate
\renewcommand{\enumerate}{
   \oldenumerate
   \setlength{\itemsep}{1pt}
   \setlength{\parskip}{0pt}
   \setlength{\parsep}{0pt}
}
\let\olditemize\itemize
\renewcommand{\itemize}{
   \olditemize
   \setlength{\itemsep}{1pt}
   \setlength{\parskip}{0pt}
   \setlength{\parsep}{0pt}
}
\theoremstyle{plain}
\newtheorem{thm}{Theorem}[section]
\newtheorem{lem}[thm]{Lemma}
\newtheorem{prop}[thm]{Proposition}
\newtheorem{cor}[thm]{Corollary}
\theoremstyle{definition}
\newtheorem{defn}[thm]{Definition}
\newtheorem{rem}[thm]{Remark}
\newtheorem{eg}[thm]{Example}
\title[Fitting ideals of class groups]
{Fitting ideals of class groups for CM abelian extensions}
\author[M.~Atsuta \& T. Kataoka]{Mahiro Atsuta and Takenori Kataoka}
\address{Faculty of Science and Technology, Keio University.
3-14-1 Hiyoshi, Kohoku-ku, Yokohama, Kanagawa 223-8522, Japan}
\email{atsuta0128@keio.ac.jp}
\email{tkataoka@math.keio.ac.jp}
\keywords{class groups, Fitting ideals, CM-fields, equivariant Tamagawa number conjecture}
\subjclass[2010]{11R29}%11R23 (Primary)}
\date{\today}
\begin{document}

\begin{abstract}
Let $K/k$ be a finite abelian CM-extension and $T$ a suitable finite set of finite primes of $k$.
In this paper, we determine the Fitting ideal of the minus component 
of the $T$-ray class group of $K$, except for the $2$-component, assuming the validity of the equivariant Tamagawa number conjecture. 
As an application, we give a necessary and sufficient condition for
the Stickelberger element to lie in that Fitting ideal.
\end{abstract}

\maketitle

%\tableofcontents

%%%%%%%%%%%%%%%%%%%%%
\section{Introduction}\label{Intro}
%%%%%%%%%%%%%%%%%%%%%

In number theory, 
the relationship between 
class groups and special values of $L$-functions is of great importance.
In this paper we discuss such a phenomenon for a finite abelian CM-extension $K/k$, that is, a finite abelian extension such that $k$ is a totally real field and $K$ is a CM-field.
We focus on the minus components of the (ray) class groups of $K$, except for the $2$-components, and study the Fitting ideals of them.

Let $\Cl_K$ denote the ideal class group of $K$.
Let $(-)^-$ denote the minus component after inverting the multiplication by $2$.
When $k = \Q$, 
Kurihara and Miura \cite{KM11} succeeded in proving a conjecture of Kurihara \cite{Kur03a} on a description of the Fitting ideal of $\Cl_K^-$ using the Stickelberger elements.
However, for a general totally real field $k$, the problem to determine the Fitting ideal of $\Cl_K^-$ is still open.

There seems to be an agreement that the {\it Pontryagin duals} (denoted by $(-)^{\vee}$) of the class groups are easier to deal with (see Greither-Kurihara \cite{GK08}).
In \cite{Gre07}, Greither determined the Fitting ideal of $\Cl_K^{\vee, -}$, assuming 
that the equivariant Tamagawa number conjecture (eTNC for short) holds
and that the group of roots of unity in $K$ is cohomologically trivial. 
Subsequently, Kurihara \cite{Kur20} generalized the results of Greither on $\Cl_K^{\vee, -}$ to results on $\Cl_K^{T, \vee, -}$, where $\Cl_K^T$ denotes the $T$-ray class group, for a finite set $T$ of finite primes of $k$.
This enables us, by taking suitably large $T$, to remove the assumption that the group of roots of unity is cohomologically trivial, 
though we still need to assume the validity of eTNC.
In recent work \cite{DK20}, 
Dasgupta and Kakde succeeded in proving {\it unconditionally} the same formula as Kurihara on the Fitting ideal of $\Cl_K^{T, \vee, -}$ (see \eqref{eq:dual_Fitt} below for the formula).

In this paper, for a general totally real field $k$,
we determine the Fitting ideal of $\Cl_K^{T, -}$ {\it without the Pontryagin dual}, assuming eTNC,
except for the $2$-component.
This problem has been considered to be harder than that on $\Cl_K^{T, \vee, -}$ and actually our result is more complicated.
Our main tool is the technique of shifts of Fitting ideals, which was established by the second author in \cite{Kata_05}. 

As an application of the description, we will obtain a necessary and sufficient condition for the Stickelberger element to be in the Fitting ideal of $\Cl_K^{T, -}$ (assuming eTNC).
Note that the question for the dualized version $\Cl_K^{T, \vee, -}$ is called the strong Brumer-Stark conjecture and is answered affirmatively by Dasgupta-Kakde \cite{DK20} unconditionally.

Though we mainly assume the validity of eTNC in this paper, we also obtain interesting unconditional results.
For instance, we will show that the Fitting ideal of $\Cl_K^{T, -}$ is always contained in that of $\Cl_K^{T, \vee, -}$, and that the inclusion is often proper.

In the rest of this section, we give precise statements of the main results. 

%%%%%%%%%%%%%%%%%%%%%
\subsection{Description of the Fitting ideal}\label{Intro1}
%%%%%%%%%%%%%%%%%%%%%

Let $K/k$ be a finite abelian CM-extension and put $G = \Gal (K/k)$.  
Let $S_{\infty}(k)$ be the set of archimedean places of $k$.
Let $S_{\ram} (K / k)$ be the set of places of $k$ which are ramified in $K/k$, including $S_{\infty}(k)$.
For each finite prime $v \in S_{\ram} (K / k)$,  
let $I_v \subset G$ denote the inertia group of $v$ in $G$ and $\varphi_v \in G / I_v$ the arithmetic Frobenius of $v$.
We then define elements $g_v$ and $h_v$ by
$$ g_v = 1-\varphi_v^{-1} + \# I_v \in \Z[G/I_v], \;\;\;
    h_v = 1 - \cfrac{\nu_{I_v}}{\# I_v}\varphi_v^{-1} + \nu_{I_v} \in \Q[G], $$
where we put $\nu_{I_v} = \sum_{\tau \in I_v} \tau$. 
These elements are introduced in \cite[Lemmas 6.1 and 8.3]{Gre07} and \cite[\S 2.2, equations (7) and (10)]{Kur20} (up to the involution).
Moreover, we define a $\Z[G]$-module $A_v$ by 
$$ A_v = \Z [G/I_v] / (g_v). $$

We write $\Z[G]^- = \Z[1/2][G] / (1+j)$, 
where $j$ is the complex conjugation in $G$. 
For any $\Z[G]$-module $M$, 
we also define the minus component by $M^-= M \otimes_{\Z[G]} \Z[G]^-$. 
Note that we are implicitly inverting the action of $2$.
For any $x \in M$, 
we write $x^-$ for the image of $x$ under the natural map $M \to M^-$.

In general, for a set $S$ of places of $k$, we write $S_K$ for the set of places of $K$ which lie above places in $S$.
We take and fix a finite set $T$ of finite primes of $k$ satisfying the following.
\begin{itemize}
\item
$T \cap S_{\ram} (K/k) = \emptyset$.
\item
$ K_T^\times = \{ x \in K^\times \mid \ord_w (x-1) > 0 \mbox{ for all primes } w \in T_K  \}$
is torsion free.
Here, $\ord_w$ denotes the normalized additive valuation.
\end{itemize}

Note that, if we fix an odd prime number $p$ and are concerned with the $p$-components, the last condition can be weakened to that $K_T^{\times}$ is $p$-torsion-free.
%We note that if a primitive $p$-th root of unity is not in $K$, we can take the set $T$ is empty. 
We consider the $T$-ray ideal class group of $K$ defined by 
\[
\Cl_K^T = \Coker \parenth{K_T^\times \overset{\oplus \ord_w}{\longrightarrow} 
\bigoplus_{w \notin T_K} \Z},
\] 
where $w$ runs over the finite primes of $K$ which are not in $T_K$.

For a character $\psi$ of $G$, 
we write $L (s,\psi)$ for the primitive $L$-function for $\psi$; 
this function omits exactly the Euler factors of primes dividing the conductor of $\psi$. 
For any finite prime $v$ of $k$, we put $N(v) = \# \F_v$, 
where $\F_v$ is the residue field of $v$. 
We then define the $T$-modified $L$-function by
$$L _T(s,\psi) = \parenth{ \prod_{v \in T} (1 - \psi(\varphi_v) N (v)^{1-s}) } L (s,\psi). $$
We define 
\begin{equation}\label{eq:omega}
\omega_T = \sum_{\psi} L_T (0, \psi) e_{\psi^{-1}} \in \Q[G],
\end{equation}
where $\psi$ runs over the characters of $G$ and 
$e_\psi = \frac{1}{\# G} \sum_{\sigma \in G} \psi(\sigma) \sigma^{-1}$
is the idempotent of the $\psi$-component.

Now the first main theorem of this paper is the following, whose proof will be given in \S \ref{sec:thm1}.

\begin{thm}\label{Main theorem 1}
Assume that eTNC for $K/k$ holds.
Then we have
\[ 
\Fitt_{\Z[G]^-} \parenth{ \Cl_K^{T, -} } =  
\parenth{\prod_{v \in S_{\ram} (K/k) \setminus S_{\infty}(k)} h_v^- \SFitt{1}_{\Z[G]^-} \parenth{ A_v^- } }
 \omega_T^-,  
\] 
where $\SFitt{1}_{\Z[G]^-}$ is the first shift of the Fitting ideal 
(see Definition \ref{defn:SFitt}). 
\end{thm}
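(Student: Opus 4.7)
The plan is to derive Theorem \ref{Main theorem 1} from the already-known dualized formula
\begin{equation}\label{eq:dual_Fitt_plan}
\Fitt_{\Z[G]^-}\parenth{\Cl_K^{T,\vee,-}} = \omega_T^- \cdot \prod_{v \in S_{\ram}(K/k) \setminus S_{\infty}(k)} \Fitt_{\Z[G]^-}\parenth{A_v^-},
\end{equation}
which holds under eTNC (Kurihara \cite{Kur20}) and in fact unconditionally (Dasgupta--Kakde \cite{DK20}). The bridge from the dualized to the non-dualized version is the shifts-of-Fitting-ideals calculus developed by the second author in \cite{Kata_05}. The outcome will be that each local factor $\Fitt(A_v^-)$ on the dual side gets replaced by $h_v^- \SFitt{1}(A_v^-)$ on the non-dual side, while the global transcendental factor $\omega_T^-$ stays put.

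First, assuming eTNC, I would build a perfect complex $C^\bullet$ of $\Z[G]^-$-modules out of the appropriate $T$-modified étale cohomology, normalized so that $H^i(C^\bullet)$ in two adjacent degrees recovers, respectively, a controllable ``unit-type'' module and $\Cl_K^{T,-}$, and so that $\det_{\Z[G]^-}(C^\bullet)$ is the ideal generated by $\omega_T^-$. Applying Pontryagin duality and Artin--Verdier duality to $C^\bullet$ produces an analogous complex on the dual side whose cohomology computes $\Cl_K^{T,\vee,-}$, and this is precisely the complex underlying \eqref{eq:dual_Fitt_plan}. The determinantal content of eTNC is thereby concentrated in a single factor $\omega_T^-$ on both sides.

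Second, the non-dualized and dualized complexes differ from one another only by mapping-cone contributions localized at the finite ramified primes $v \in S_{\ram}(K/k) \setminus S_{\infty}(k)$, and each such local cone is governed by the module $A_v = \Z[G/I_v]/(g_v)$ together with the rational element $h_v$ encoding $\nu_{I_v}$ and $\varphi_v$. The shift formalism of \cite{Kata_05}, applied to the local short exact sequences built from these cones, then transforms \eqref{eq:dual_Fitt_plan} into the desired formula: at each $v$, the factor $\Fitt_{\Z[G]^-}(A_v^-)$ is replaced by $h_v^- \SFitt{1}_{\Z[G]^-}(A_v^-)$, which is the exact manifestation of the slogan ``Pontryagin dual shifts the Fitting ideal by one, up to the local correction $h_v$.''

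The main obstacle I anticipate is the careful local analysis at each ramified prime $v$. One must verify both that the local mapping cone really is (a shift of) $A_v^-$, with no hidden extra module, and that the rational element $h_v^- \in \Q[G]^-$ is precisely the correction that makes $h_v^- \SFitt{1}(A_v^-)$ integral, i.e., contained in $\Z[G]^-$. This requires a delicate inspection of the local cohomology attached to $I_v$ and $\varphi_v$, and is the place where the specific shape of $h_v$ (with its explicit coefficients $\nu_{I_v}/\#I_v$ and $\nu_{I_v}$) emerges naturally. A secondary subtlety is that the shift machinery requires finite projective dimension; restricting to the minus part after inverting $2$ is what makes $\Z[G]^-$ sufficiently well-behaved for the argument to go through, which also explains why the $2$-component has to be excluded from the statement.
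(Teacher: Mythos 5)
Your strategy has a genuine logical gap at its core: you propose to \emph{derive} the non-dualized formula from the dualized one \eqref{eq:dual_Fitt} by a factor-by-factor ``replacement rule'' coming from the shift calculus, but no such formal passage exists. The ideals $\Fitt_{\Z[G]^-}(\Cl_K^{T,-})$ and $\Fitt_{\Z[G]^-}(\Cl_K^{T,\vee,-})$ are genuinely different invariants — the paper's Theorem \ref{thm:vs_dual} shows the inclusion between them is often \emph{proper} — and the shift calculus only tells you that $\Fitt_{\Z[G]^-}(\Cl_K^{T,\vee,-}) = \SFittN{-2}_{\Z[G]^-}(\Cl_K^{T,-})$. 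Knowing $\SFittN{-2}$ of a module of projective dimension $>1$ does not determine its $\Fitt = \SFitt{0}$; to compute either one you need an actual resolution of $\Cl_K^{T,-}$ by modules in $\PP$, and that resolution cannot be extracted from the dual formula after the fact. This is exactly the missing ingredient: the paper constructs explicit injections $f_v : W_v \to \Z[G]$ with $\Coker f_v \simeq A_v$ (Lemma \ref{lemma A_v}), splices them with Kurihara's sequence $0 \to \fA^- \to W_{S_\infty}^- \to \Cl_K^{T,-} \to 0$ to obtain
\[
0 \longrightarrow \Cl_K^{T,-} \longrightarrow \Coker\psi \longrightarrow \bigoplus_{v} A_v^- \longrightarrow 0
\]
with $\pd_{\Z[G]^-}(\Coker\psi) \le 1$, and only then does Definition \ref{defn:SFitt} yield $\Fitt(\Cl_K^{T,-}) = \Fitt(\Coker\psi)\prod_v \SFitt{1}(A_v^-)$. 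The role of eTNC is also misplaced in your sketch: it is not used to ``normalize $\det(C^\bullet)$ to $\omega_T^-$'' but to compute the determinant of the composite $\fA^- \hookrightarrow W_{S_\infty}^- \to (\Z[G]^-)^{\oplus \# S'}$, which equals $\bigl(\prod_v h_v^-\bigr)\omega_T^-$ — the $h_v$ factors arise from the specific bases and the maps $f_v$, not from making $\SFitt{1}(A_v^-)$ integral (indeed $h_v^-\SFitt{1}(A_v^-)$ is generally a fractional ideal).

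A secondary but real error: your starting point \eqref{eq:dual_Fitt_plan} misstates the dual formula. The local factor there is $\parenth{\nu_{I_v},\, 1 - \frac{\nu_{I_v}}{\# I_v}\varphi_v^{-1}}^-$, which the paper identifies (Proposition \ref{prop:Fitt-1}) as $h_v^-\SFittN{-1}_{\Z[G]^-}(A_v^-)$; this is \emph{not} $\Fitt_{\Z[G]^-}(A_v^-)$, which equals $(\II_{I_v}, \wtil{g}_v)^-$ for a lift $\wtil{g}_v$ of $g_v$. So even the slogan ``replace $\Fitt(A_v^-)$ by $h_v^-\SFitt{1}(A_v^-)$'' does not match the two formulas being compared; the actual relation is between $\SFittN{-1}$ and $\SFitt{1}$ of the same module $A_v^-$, and these coincide only when $I_v$ is cyclic (Corollary \ref{cor:Fitt1_vs_Fitt-1}). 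To repair the proposal you would need to supply the explicit local maps $f_v$ and the resulting global exact sequence, at which point the dual formula is no longer an input but a parallel output.
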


In the second main result below, we will obtain a concrete description of $h_v^- \SFitt{1}_{\Z[G]^-} \parenth{ A_v^- }$, which completes the description of the Fitting ideal of $\Cl_K^{T, -}$.
We do not review the precise statement of eTNC (see e.g. \cite[Conjecture 3.6]{BKS16}).
%That is because it plays a practical role only in Theorem \ref{Kurihara} later, where we just make use of \cite[Proposition 3.5]{Kur20}.

In order to compare with Theorem \ref{Main theorem 1}, we recall the result for the dualized version:
\begin{equation}\label{eq:dual_Fitt}
 \Fitt_{\Z[G]^-} ( \Cl_K^{T, \vee, -} ) =  
\parenth{ \prod_{v \in S_{\ram} (K/k) \setminus S_{\infty}(k)}  
\parenth{\nu_{I_v} , 1-\cfrac{\nu_{I_v}}{\# I_v}\varphi_v^{-1} }^-} \omega_T^-.
\end{equation}
As already mentioned, Kurihara \cite[Corollary 3.7]{Kur20} showed this formula under the validity of the eTNC, and recently Dasgupta-Kakde \cite[Theorem 1.4]{DK20} removed the assumption.
Here, for a general $G$-module $M$, we equip the Pontryagin dual $M^{\vee}$ with the $G$-action by $(\sigma f)(x) = f(\sigma x)$ for $\sigma \in G$, $f \in M^{\vee}$, and $x \in M$.
This convention is the opposite of \cite{Kur20} and \cite{DK20}, so the right hand side of the formula \eqref{eq:dual_Fitt} differs from those by the involution.

We now briefly outline the proof of Theorem \ref{Main theorem 1}.
An important ingredient is an exact sequence of $\Z[G]^-$-modules of the form
$$
0 \longrightarrow  \mathfrak{A}^- \longrightarrow W_{S_\infty}^- \longrightarrow \Cl_K^{T, -}
\longrightarrow 0
$$
as in Proposition \ref{prop:fundamental}, where $\fA^-$ is free of finite rank $\# S'$.
Here, $S'$ is an auxiliary finite set of places of $k$.
This sequence was constructed by Kurihara \cite{Kur20}, based on preceding work such as Ritter-Weiss \cite{RW96} and Greither \cite{Gre07}, and played a key role in proving \eqref{eq:dual_Fitt} under eTNC.
Our novel idea is to construct an explicit injective homomorphism from $W_{S_{\infty}}^-$ to $(\Z[G]^-)^{\oplus \# S'}$ whose cokernel is isomorphic to the direct sum of $A_v^-$ for $v \in S' \setminus S_{\infty}(k)$.
Moreover, assuming eTNC, we will compute the determinant of the composite map $\fA^- \hookrightarrow W_{S_{\infty}}^- \hookrightarrow (\Z[G]^-)^{\oplus \# S'}$.
By using these observations, we obtain an exact sequence to which the theory of shifts of Fitting ideals can be applied, and then Theorem \ref{Main theorem 1} follows.

%%%%%%%%%%%%%%%%%%%%%
\subsection{Computation of the shift of Fitting ideal}\label{Intro2}
%%%%%%%%%%%%%%%%%%%%%

In order to make the formula of Theorem \ref{Main theorem 1} more explicit, in \S \ref{sec:Fitt1}, we will compute $\SFitt{1}_{\Z[G]} \parenth{A_v}$. 
This will be accomplished by using a similar method as Greither-Kurihara \cite[\S 1.2]{GK15}, which was actually a motivation for introducing the shifts of Fitting ideals in \cite{Kata_05}.

As the problem is purely algebraic, we deal with a general situation as follows (it should be clear from the notation how to apply the results below to the arithmetic situation; simply add subscripts $v$ appropriately).
Let $G$ be a finite abelian group.
Let $I$ and $D$ be subgroups of $G$ such that $I \subset D \subset G$ and that the quotient $D/I$ is a cyclic group.
We choose a generator $\varphi$ of $D/I$ and put
\[
g = 1 - \varphi^{-1} + \# I \in \Z[G/I],
\qquad h = 1 - \frac{\nu_{I}}{\# I}\varphi^{-1} + \nu_{I} \in \Q[G],
\]
which are non-zero-divisors.
We define a finite $\Z[G]$-module $A$ by
\[
A = \Z[G/I] / (g).
\]

In order to state the result, we introduce some notations.
We choose a decomposition
\begin{equation}\label{eq:I_decomp}
I = I_1 \times \cdots \times I_s
\end{equation}
as an abelian group such that $I_l$ is a cyclic group for each $1 \leq l \leq s$.
Here, we do not assume any minimality on this decomposition, so we allow even the extreme case where $I_l$ is trivial for some $l$.

%\begin{defn}\label{defn:J}
%We define an ideal $\JJ$ of $\Z[G]$ by
%\[
%\JJ = \parenth{\nu_1^{e_1} \dots \nu_s^{e_s} \tau_1^{f_1} \dots \tau_s^{f_s} (1 - \wtil{\varphi})^{f_0} \mid (*)}
%\]
%where the condition $(*)$ is 
%\[
%\begin{cases}
%e_l \in \{0, 1\} &(1 \leq l \leq s)\\
%f_l \in \{0, 1, \dots, s - 1 \} & (0 \leq l \leq s)\\
%e_l = 1 \Rightarrow f_l = 0 & (1 \leq l \leq s)\\
%e_1 + \dots + e_s + f_1 + \dots + f_s + f_0 = s - 1.
%\end{cases}
%\]
%\end{defn}

For each $1 \leq l \leq s$, we put $\nu_l = \nu_{I_l} = \sum_{\sigma \in I_l} \sigma \in \Z[G]$.
We also put $\II_D = \Ker(\Z[G] \to \Z[G/D])$.

\begin{defn}\label{defn:J}
For $0 \leq i \leq s$, we define $Z_i$ as the ideal of $\Z[G]$ generated by $\nu_{l_1} \cdots \nu_{l_{s - i}}$ where $(l_1, \dots, l_{s - i})$ runs over all tuples of integers satisfying $1 \leq l_1 < \cdots < l_{s - i} \leq s$, that is,
\[
Z_i = (\nu_{l_1} \cdots \nu_{l_{s - i}} \mid 1 \leq l_1 < \cdots < l_{s - i} \leq s).
\]
We clearly have $Z_0 = (\nu_I) \subset Z_1 \subset \cdots \subset Z_s = (1)$.
We then define an ideal $\JJ$ of $\Z[G]$ by
\[
\JJ = \sum_{i = 1}^{s} Z_{i} \II_D^{i - 1}.
\]
\end{defn}

Note that the definition of $Z_i$ does depend on the choice of the decomposition \eqref{eq:I_decomp}.
On the other hand, it can be shown directly that the ideal $\JJ$ is independent from the choice.
We omit the direct proof because, at any rate, the independency can be deduced from the discussion in \S \ref{sec:Fitt1}.

\begin{eg}
When $s = 1$, we have $\JJ = (1)$.

When $s = 2$, we have 
\[
\JJ = (\nu_1, \nu_2) + \II_D.
\]

When $s = 3$, we have 
\[
\JJ = (\nu_1 \nu_2, \nu_2 \nu_3, \nu_3 \nu_1) + (\nu_1, \nu_2, \nu_3)\II_D + \II_D^2.
\]
\end{eg}

%We prepare some notations to state the result on $\SFitt{1}_{\Z[G]} \parenth{A_v}$.
%In this subsection we fix a finite prime $v \in S_{\ram} (K/k) \setminus S_\infty$ and we omit $v$ from the notations below. 
%We choose a decomposition
%\[
%I_v = I_1 \times \dots \times I_s
%\]
%as an abelian group, where $I_l$ is a cyclic group for each $1 \leq l \leq s$.
%We put $n_l = \# I_l$ for each $l$.
%For each $1 \leq l \leq s$, we choose a generator $\sigma_l$ of $I_l$, and put
%\[
%\tau_l = \sigma_l - 1, 
%\qquad \nu_l = 1 + \sigma_l + \sigma_l^2 + \dots + \sigma_l^{n_l - 1}.
%\]
%For $0 \leq i \leq s$, we define an ideal $J_i$ of $\Z[G]$ by
%\[
%J_i =
%\begin{cases}
%	(\nu_1 \cdots \nu_s) = (\nu_{I_v}) & (i = 0)\\
%	\parenth{\nu_1^{e_1} \dots \nu_s^{e_s} \tau_1^{f_1} \dots \tau_s^{f_s} \mid (*)} & (1 \leq i \leq s)
%\end{cases}
%\]
%where the condition $(*)$ is 
%\[
%\begin{cases}
%e_l \in \{0, 1\} &(1 \leq l \leq s)\\
%f_l \in \{0, 1, \dots, s - i\} & (1 \leq l \leq s)\\
%e_l = 1 \Rightarrow f_l = 0 & (1 \leq l \leq s)\\
%e_1 + \dots + e_s + f_1 + \dots + f_s = s - i.
%\end{cases}
%\]
%
%We choose a lift $\wtil{\varphi_v} \in G$ of $\varphi_v$ and put
%$
%\wtil{g_v} = 1 - \wtil{\varphi_v^{-1}} + \# I_v \in \Z[G].
%$
%Obviously, $g_v$ is the natural image of $\wtil{g_v}$ to $\Z[G/I_v]$.

In this setting, we can describe $\SFitt{1}_{\Z[G]}(A)$ as follows. 
It is convenient to state the result after multiplying by $h$.

\begin{thm}\label{Main theorem 2}
We have
\[
h \SFitt{1}_{\Z[G]}(A) = \parenth{\nu_I, \parenth{1 - \frac{\nu_I}{\# I} \varphi^{-1} } \JJ}
\]
as fractional ideals of $\Z[G]$.
\end{thm}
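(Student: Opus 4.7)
The plan is to adapt the method of Greither--Kurihara \cite[\S 1.2]{GK15}, which motivated the introduction of shifts of Fitting ideals in \cite{Kata_05}. The strategy is to build an explicit free resolution of $A$ over $\Z[G]$ using the decomposition \eqref{eq:I_decomp}, and then read off $\SFitt{1}_{\Z[G]}(A)$ from the associated first syzygy.

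First, I lift $\varphi$ to some $\widetilde{\varphi} \in G$ and set $\widetilde{g} = 1 - \widetilde{\varphi}^{-1} + \# I \in \Z[G]$, so that $A \cong \Z[G]/(\sigma_1 - 1, \dots, \sigma_s - 1, \widetilde{g})$, where $\sigma_l$ is a generator of $I_l$. The two-periodic free resolution of $\Z$ over $\Z[I_l]$, with alternating differentials $\sigma_l - 1$ and $\nu_l$, can be tensored over the $s$ cyclic factors to produce a complete free resolution of $\Z[G/I]$ as a $\Z[G]$-module. Splicing this with the two-term complex $[\Z[G] \xrightarrow{\widetilde{g}} \Z[G]]$ yields a free resolution of $A$ whose low-degree differentials are Koszul-type matrices with entries drawn from $\{\sigma_l - 1,\, \nu_l,\, \widetilde{g}\}$.

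Next, I apply Definition \ref{defn:SFitt} to this resolution, so that $\SFitt{1}_{\Z[G]}(A)$ is computed as the Fitting ideal of the first syzygy. Expanding the relevant determinantal minors, each generator corresponds to a choice of subset $S \subseteq \{1, \dots, s\}$: the $l$-th Koszul factor contributes $\nu_l$ for $l \in S$ and $\sigma_l - 1 \in \II_D$ for $l \notin S$, together with one accompanying factor of $\widetilde{g}$. Grouping by the size $s - i$ of $S$, the combinatorics of these contributions matches exactly the decomposition $\JJ = \sum_{i=1}^{s} Z_i \II_D^{i - 1}$ of Definition \ref{defn:J}, with an additional $\nu_I$-term coming from the degenerate case $S = \{1, \dots, s\}$.

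Finally, a direct computation multiplies through by $h$. Using the orthogonal idempotents $\nu_I / \# I$ and $1 - \nu_I / \# I$, which split $\Q[G]$ into two components, one checks that $h \cdot \widetilde{g}$ reduces on the $\nu_I$-component to yield the $\nu_I$ summand, and on the complementary component produces the factor $(1 - \frac{\nu_I}{\# I}\varphi^{-1})$ multiplying $\JJ$. This gives the claimed identity $h \cdot \SFitt{1}_{\Z[G]}(A) = (\nu_I,\, (1 - \frac{\nu_I}{\# I}\varphi^{-1})\JJ)$. The main obstacle I anticipate is in the middle step: rigorously establishing that the Koszul minors generate \emph{exactly} $\JJ$, with the powers $\II_D^{i - 1}$ rather than $\II_I^{i - 1}$. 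The upgrade from $\II_I$ to $\II_D$ is forced by the interaction of $\widetilde{g}$ with the Koszul differentials and requires careful combinatorial bookkeeping, as does the a posteriori verification that $\JJ$ is independent of both the decomposition \eqref{eq:I_decomp} and the lift $\widetilde{\varphi}$.
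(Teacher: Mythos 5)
Your overall strategy coincides with the paper's: both use the Greither--Kurihara tensor product of the two-periodic resolutions of $\Z$ over the cyclic factors $\Z[I_l]$, and both finish by trading $\wtil{g}=1-\wtil{\varphi}^{-1}+\# I$ for the pair $\nu_I$ and $1-\frac{\nu_I}{\# I}\varphi^{-1}$ via $\nu_I h=\nu_I\wtil{g}$. However, the step you yourself flag as ``the main obstacle'' is exactly where the content of the proof lies, and as written your plan has genuine gaps there. First, Definition \ref{defn:SFitt} cannot be applied to a free resolution of $A$: the middle terms $P_i$ must be \emph{finite} modules of projective dimension at most one, so ``the Fitting ideal of the first syzygy'' of a free resolution is not what the definition computes. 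The correct move (Proposition \ref{prop:01}) is to use the sequence $0 \to \II_I/\wtil{g}\II_I \to \Z[G]/(\wtil{g}) \to A \to 0$, giving $\SFitt{1}_{\Z[G]}(A)=\wtil{g}^{-1}\Fitt_{\Z[G]}(\II_I/\wtil{g}\II_I)$, and then Lemma \ref{lem:hFitt} turns this into $\sum_{i=0}^{s}\wtil{g}^{\,i-1}\Fitt_{i,\Z[G]}(\II_I)$. In particular one needs \emph{all} higher Fitting ideals $\Fitt_{i,\Z[G]}(\II_I)$, each weighted by a different power $\wtil{g}^{\,i-1}$; your ``one accompanying factor of $\wtil{g}$'' does not capture this.

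Second, ``expanding the relevant determinantal minors'' only yields the easy containment: every minor visibly lies in the claimed ideal, but the reverse inclusion is not formal. The paper must prove $\Fitt_{i,\Z[G]}(N_s(\tau_1,\dots,\tau_s))=(\tau_1,\dots,\tau_s)^{s-i}$ for $1\le i<s$ by induction on $s$, and, crucially, $\Fitt_{0,\Z[G]}(N_s)=0$ for $s\ge 1$ by lifting to a polynomial ring and invoking the Koszul complex; the vanishing is what guarantees that the $i=0$ term contributes only $(\nu_I)$ and no spurious $\wtil{g}^{-1}\II_I^{\,s}$-type generators. Finally, the upgrade from $\II_I$ to $\II_D$ is not visible in the minors at all: it comes from the identity $\sum_{i=1}^{s}\wtil{g}^{\,i-1}J_i=\sum_{i=1}^{s}(1-\wtil{\varphi}^{-1})^{i-1}J_i$, whose proof needs the inclusion $(\II_I,\# I)J_{i+1}\subset J_i$ to absorb the $\# I$ terms arising when $\wtil{g}^{\,i-1}=(1-\wtil{\varphi}^{-1}+\# I)^{i-1}$ is expanded; only then do the products $(1-\wtil{\varphi}^{-1})^{i-1}\II_I^{\,j}$ assemble into $\II_D^{\,k-1}$ and produce $\JJ$. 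So the proposal is a correct outline of the paper's route, but its central computations are left undone and cannot be dismissed as bookkeeping.
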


%%%%%%%%%%%%%%%%%%%%%
\subsection{Stickelberger element and Fitting ideal}\label{Intro3}
%%%%%%%%%%%%%%%%%%%%%

As an application of Theorems \ref{Main theorem 1} and \ref{Main theorem 2}, 
we shall discuss the problem whether or not
the Stickelberger element lies in the Fitting ideal of $\Cl_K^{T, -}$.

We return to the setup in \S \ref{Intro1}.
Let $p$ be a fixed odd prime number and we shall work over $\Z_p$.
Let $G'$ denote the maximal subgroup of $G$ of order prime to $p$.
We put $k_p = K^{G'}$, which is the maximal $p$-extension of $k$ contained in $K$.
For each character $\chi$ of $G'$, we regard $\OO_{\chi} = \Z_p[\Imag(\chi)]$ as a $\Z_p[G']$-module via $\chi$, and put
$\Z_p[G]^{\chi} = \Z_p[G] \otimes_{\Z_p[G']} \OO_{\chi}$.
For a $\Z_p[G]$-module $M$, we put $M^{\chi} = M \otimes_{\Z_p[G]} \Z_p[G]^{\chi}$, which is an $\Z_p[G]^{\chi}$-module.
For an element $x \in M$, we write $x^{\chi}$ for the image of $x$ by the natural map $M \to M^{\chi}$.
We note that $\Z_p[G]$ is isomorphic to the direct product of $\Z_p[G]^{\chi}$ if $\chi$ runs over the equivalence classes of characters of $G'$.
%there is a natural isomorphism of algebras
%$
%\Z_p[G] \simeq \prod_{\chi} R_{\chi},
%$
%where $\chi$ runs over equivalent classes of characters of $G'$.
%Here, two $\ol{\Q_p}$-valued characters are said to be equivalent if they coincide up to $\Gal(\ol{\Q_p}/\Q_p)$.

From now on, we fix an odd character $\chi$ of $G'$.
We define $K_{\chi} = K^{\Ker(\chi)}$.
Then $K_{\chi}$ is a CM-field, $K_{\chi} \supset k_p$, and $K_{\chi}/k_p$ is a cyclic extension of order prime to $p$.

We put $S_{\chi} = S_{\ram}(K_{\chi}/k)$ and  
consider the $\chi$-component of the Stickelberger element defined by
\begin{equation}\label{eq:defn_theta}
\theta_{K/k, T}^{\chi} = \sum_{\psi|_{G'} = \chi} L_{S_{\chi}, T}(0, \psi) e_{\psi^{-1}} \in \Z_p[G]^{\chi},
\end{equation}
where $\psi$ runs over characters of $G$ whose restriction to $G'$ coincides with $\chi$ and we write
\[
L_{S_{\chi}, T}(s, \psi)
= \parenth{ \prod_{v \in S_{\chi} \setminus S_{\infty}(k)} (1 - \psi(\varphi_v))} 
	\parenth{ \prod_{v \in T} (1 - \psi(\varphi_v)N(v)^{1 - s})}
	L(s, \psi).
\]
Note that, comparing \eqref{eq:omega} and \eqref{eq:defn_theta}, we have
\begin{equation}\label{eq:theta}
\theta_{K/k, T}^{\chi}
= \parenth{\prod_{v \in S_{\chi} \setminus S_\infty(k)} \parenth{1 - \frac{\nu_{I_v}}{\# I_v} \varphi_v^{-1}}^{\chi}} \omega_T^{\chi}.
\end{equation}

Concerning the dualized version, by Dasgupta-Kakde \cite[Theorem 1.3]{DK20},
\[
\theta_{K/k, T}^{\chi} \in \Fitt_{\Z_p[G]^{\chi}} ( (\Cl_K^T \otimes \Z_p)^{\vee, \chi} )
\]
is always true.
This is called the strong Brumer-Stark conjecture.
More precisely, the displayed claim is a bit stronger than \cite[Theorem 1.3]{DK20} as we took $S_{\chi}$ instead of $S_{\ram}(K/k)$ in the definition of the Stickelberger element, but in any case it is an immediate consequence of the formula \eqref{eq:dual_Fitt}.

On the other hand, the corresponding claim without dual is known to be false in general (see \cite{GK08}).
However, we had only partial results and an exact condition was unknown.
The following theorem is strong as it gives a {\it necessary and sufficient} condition.

\begin{thm}\label{Main theorem 3}
Assume that eTNC for $K/k$ holds. 
Then, for each odd character $\chi$ of $G'$, the following are equivalent.
\begin{itemize}
\item[(i)]
We have $\theta_{K/k, T}^{\chi} \in \Fitt_{\Z_p[G]^{\chi}}((\Cl_K^T \otimes \Z_p)^{\chi})$.
\item[(ii)]
We have either $\theta_{K/k, T}^{\chi} = 0$ or, for any $v \in S_{\chi} \setminus S_{\infty}(k)$, one of the following holds.
\begin{itemize}
\item[(a)]
$v$ does not split completely in $K_{\chi}/k_p$.
\item[(b)]
The inertia group $I_v$ is cyclic.
\end{itemize}
\end{itemize}
\end{thm}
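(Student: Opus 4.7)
The plan is to combine Theorems~\ref{Main theorem 1} and~\ref{Main theorem 2} to obtain a completely explicit description of $\Fitt_{\Z_p[G]^\chi}((\Cl_K^T \otimes \Z_p)^\chi)$, and then to compare it factor by factor with the expression \eqref{eq:theta} for $\theta_{K/k,T}^\chi$. First I take the $\chi$-component of Theorem~\ref{Main theorem 1}. For each $v \in S_{\ram}(K/k) \setminus S_\chi$ we have $I_v \subseteq \Ker(\chi) \subseteq G'$, so $\chi$ is trivial on $I_v$ and $\# I_v$ is prime to $p$; then $\nu_{I_v}^\chi = \# I_v$ is a unit in $\Z_p$ and Theorem~\ref{Main theorem 2} forces $h_v^\chi \SFitt{1}(A_v^\chi) = (1)$. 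Hence the product reduces to one over $v \in S_\chi' := S_\chi \setminus S_\infty(k)$. Setting $c_v := \bigl(1 - \tfrac{\nu_{I_v}}{\# I_v}\varphi_v^{-1}\bigr)^\chi$ and $F_v := (\nu_{I_v}^\chi,\, c_v \JJ_v^\chi)$, and combining with \eqref{eq:theta}, condition~(i) becomes
\[
\omega_T^\chi \prod_{v \in S_\chi'} c_v \in \omega_T^\chi \prod_{v \in S_\chi'} F_v.
\]
If $\theta_{K/k,T}^\chi = 0$ the containment is vacuous and matches~(ii), so from now on we assume $\theta_{K/k,T}^\chi \neq 0$.

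The next step is the local criterion: for each $v \in S_\chi'$, one has $\JJ_v^\chi = (1)$ if and only if $v$ satisfies (a) or (b). The direction ``$\Leftarrow$'' is direct. If (b) holds, then one may take $s = 1$ in Definition~\ref{defn:J}, so $\JJ_v = (1)$ outright. If (a) holds, then $\chi|_{D_v \cap G'}$ is nontrivial, so some $\sigma \in D_v \cap G'$ satisfies $\chi(\sigma) \neq 1$; since $\chi(\sigma)$ is a root of unity of order prime to $p$, the element $\chi(\sigma) - 1 = (\sigma - 1)^\chi$ is a unit in $\OO_\chi$, whence $\II_{D_v}^\chi = (1)$ and thus $\JJ_v^\chi \supseteq Z_s^\chi \II_{D_v}^{s - 1,\chi} = (1)$. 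The reverse implication requires a careful analysis of Definition~\ref{defn:J} on the $\chi$-component using the structural constraint that $\chi|_{D_v \cap G'}$ is trivial while $I_v$ is non-cyclic. Granted the criterion, the implication (ii)$\Rightarrow$(i) is immediate: when $\JJ_v^\chi = (1)$ at every $v \in S_\chi'$, each $F_v$ contains $c_v$, so $\prod c_v \in \prod F_v$ and (i) follows.

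The main obstacle is the converse (i)$\Rightarrow$(ii), which I would approach by contraposition: assume some $v_0 \in S_\chi'$ fails both (a) and (b), and deduce $\theta_{K/k,T}^\chi \notin \Fitt$. The subtlety is that the product structure of the Fitting ideal could a priori compensate the local defect at $v_0$ via generators contributed by the other factors. To control this, my plan is to exploit the inclusion $\Fitt_{\Z_p[G]^\chi}((\Cl_K^T \otimes \Z_p)^\chi) \subseteq \Fitt_{\Z_p[G]^\chi}((\Cl_K^T \otimes \Z_p)^{\vee,\chi})$ together with the Dasgupta--Kakde formula \eqref{eq:dual_Fitt}: $\theta_{K/k,T}^\chi$ automatically lies in the dualised Fitting ideal, so condition (i) is equivalent to the image of $\theta_{K/k,T}^\chi$ in the quotient of the dualised ideal by $\Fitt$ vanishing. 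Locally at $v_0$ this quotient is controlled by $\JJ_{v_0}^\chi \subsetneq (1)$, producing a nontrivial contribution; the task is then to trace through the product and show that the image of $\theta_{K/k,T}^\chi$ in this nontrivial piece survives the corrections from the other $v \neq v_0$ and from the (possibly zero-divisor) element $\omega_T^\chi$.
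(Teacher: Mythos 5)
Your reduction to the product over $v \in S_{\chi}\setminus S_\infty(k)$ and your treatment of the direction (ii)$\Rightarrow$(i) are essentially sound and match the paper: when (b) holds one has $s=1$ so $\JJ_v=(1)$, and when (a) holds $\II_{D_v}^{\chi}=(1)$ forces $\JJ_v^{\chi}=(1)$, so each local factor $h_v^{\chi}\SFitt{1}_{\Z_p[G]}(A_v\otimes\Z_p)^{\chi}$ contains $\bigl(1-\tfrac{\nu_{I_v}}{\# I_v}\varphi_v^{-1}\bigr)^{\chi}$ and the containment of the products follows. (One small point: you should justify that $\omega_T^{\chi}$ is a non-zero-divisor, which is what lets you pass between the membership statement for $\theta^{\chi}_{K/k,T}$ and the ideal containment with $\omega_T^{\chi}$ cancelled; you leave this uncertain, and without it the hard direction does not even reduce to an ideal comparison.)

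The genuine gap is in (i)$\Rightarrow$(ii). You correctly identify the obstacle -- that a local defect at one $v_0$ could a priori be compensated in the product -- but the route you propose (comparing with the dualized Fitting ideal and the Dasgupta--Kakde formula, then ``tracing through the product'') is not carried out, and it is not clear it can be: the quotient of one principal-type ideal by another over the non-domain $\Z_p[G]^{\chi}\cong\OO_{\chi}[G_p]$ does not localize over the factors $v$ in any obvious way. The paper's key idea, which is absent from your proposal, is to use the hypothesis $\theta_{K/k,T}^{\chi}\neq 0$ to produce (Lemma \ref{lem:psi}) a single character $\psi$ of $G$ with $\psi|_{G'}=\chi$ that is nontrivial on $D_v$ for \emph{every} $v\in S_{\chi}\setminus S_\infty(k)$, and then to apply $\psi$ so that the whole comparison takes place in the discrete valuation ring $\OO_{\psi}$. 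There one computes exactly (Lemma \ref{lem:e04}(2)) that $\psi\bigl(h_v\SFitt{1}(A_v\otimes\Z_p)\bigr)=\psi(\II_{D_v})^{s_v-1}\,\psi\bigl(1-\tfrac{\nu_{I_v}}{\# I_v}\varphi_v^{-1}\bigr)$, with $\psi(\II_{D_v})\subsetneqq(1)$; since ideals of a DVR are totally ordered and multiply via valuations, a strict inclusion at $v_0$ combined with inclusions at all other $v$ yields a strict inclusion of the products, contradicting (i). Without this reduction to a DVR (or an equivalent device), your argument does not close; note also that even your ``local criterion'' $\JJ_v^{\chi}\neq(1)$ does not by itself give $c_v\notin F_v$, because of the extra generator $\nu_{I_v}^{\chi}$, which is another reason the evaluation at $\psi$ is needed.
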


This theorem will be proved in \S \ref{sec:Stickel} as an application of Theorems \ref{Main theorem 1} and \ref{Main theorem 2}.
Note that there is an elementary equivalent condition for $\theta_{K/k, T}^{\chi} = 0$ as in Lemma \ref{lem:psi}.

Theorem \ref{Main theorem 3} indicates that the failure of the inertia groups to be cyclic is an obstruction for studying the Fitting ideal of the class group {\it without dual}.
The same phenomenon will appear again in Theorem \ref{thm:vs_dual} below.
We should say that this kind of phenomenon had been observed in preceding work, such as Greither-Kurihara \cite{GK08}.
It is also remarkable that the obstruction does not occur in the absolutely abelian case (i.e. when $k = \Q$), since in that case the inertia groups are automatically cyclic, apart from the $2$-parts.
This seems to fit the fact that Kurihara and Miura \cite{Kur03a}, \cite{KM11} succeeded in studying the class groups without dual in the absolutely abelian case.

Let us outline the proof of Theorem \ref{Main theorem 3}.
We assume that $\chi$ is a faithful character of $G'$ (i.e. $K_{\chi} = K$); actually we can deduce the general case from this case.
Since $\omega_T^{\chi}$ is a non-zero-divisor of $\Z_p[G]^{\chi}$, by Theorem \ref{Main theorem 1} and \eqref{eq:theta}, we have
$\theta_{K/k, T}^{\chi} \in \Fitt_{\Z_p[G]^{\chi}}((\Cl_K^T \otimes \Z_p)^{\chi})$ if and only if 
\begin{equation}\label{eq:e11}
\prod_{v} \parenth{1 - \frac{\nu_{I_v}}{\# I_v} \varphi_v^{-1}}^{\chi}
\subset \prod_{v} \parenth{ h_v \SFitt{1}_{\Z_p[G]}(A_v \otimes \Z_p)}^{\chi}
\end{equation}
holds as fractional ideals of $\Z_p[G]^{\chi}$, where on both sides $v$ runs over the elements of $S_{\ram}(K/k) \setminus S_{\infty}(k)$.

Obviously we may assume that $\theta_{K/k, T}^{\chi} \neq 0$.
The proof of (ii) $\Rightarrow$ (i) is the easier part.
We will show that, under the assumption (ii), the inclusion of \eqref{eq:e11} holds even for every $v$ before taking the product.
On the other hand, the opposite direction (i) $\Rightarrow$ (ii) is the harder part.
That is because, roughly speaking, we have to work over the ring $\Z_p[G]^{\chi}$, whose ring theoretic properties are not very nice.
%That is because it is not enough to show, under the negation of (ii), the failure of the inclusion of \eqref{eq:e11} for one $v$, as the ideals associated to the other primes might allow \eqref{eq:e11} to hold.
A key idea to overcome this issue is to reduce to a computation in a discrete valuation ring.
More concretely, we make use of a character $\psi$ of $G$ which satisfies $\psi|_{G'} = \chi$ and a certain additional condition, whose existence is verified by Lemma \ref{lem:psi}, and we consider the $\Z_p[G]^{\chi}$-algebra $\OO_{\psi} = \Z_p[\Imag(\psi)]$.
By investigating the ideals in \eqref{eq:e11} after base change from $\Z_p[G]^{\chi}$ to $\OO_{\psi}$, we will show (i) $\Rightarrow$ (ii).

%\begin{rem} \label{remark Sti}
%The Stickelberger element is always contained in the Stickelberger ideal defined by Kurihara 
%(see Remark \ref{Stickelberger ideal}). 
%Therefore, if the condition (ii) in Theorem \ref{Main theorem 3} does not holds, 
%we know that $\Fitt_{R_{\chi}}(\Cl_K^{T, \chi})$ does not coincides with the $\chi$-component of the
%Stickelberger ideal under eTNC. 
%\end{rem}

%%%%%%%%%%%%%%%%%%%%%
\subsection{Unconditional consequences}
%%%%%%%%%%%%%%%%%%%%%

Even if we do not assume the validity of eTNC, our argument shows the following.

\begin{thm}\label{thm:vs_dual}
We have an inclusion
\[
\Fitt_{\Z[G]^-} \parenth{ \Cl_K^{T, -} } \subset 
\Fitt_{\Z[G]^-} \parenth{ \Cl_K^{T, \vee, -} }. 
\]
Moreover, the inclusion is an equality if $I_v$ is cyclic for every $v \in S_{\ram}(K/k) \setminus S_{\infty}(k)$.
\end{thm}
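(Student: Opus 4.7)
The strategy is to revisit the proof of Theorem \ref{Main theorem 1} and isolate precisely where eTNC is invoked. That argument produces two unconditional exact sequences of $\Z[G]^-$-modules
\[
0 \to \fA^- \to W_{S_\infty}^- \to \Cl_K^{T,-} \to 0, \qquad 0 \to W_{S_\infty}^- \to (\Z[G]^-)^{\oplus \# S'} \to \bigoplus_{v} A_v^- \to 0,
\]
with $\fA^-$ free of rank $\# S'$. Splicing them gives $0 \to \fA^- \xrightarrow{F} (\Z[G]^-)^{\oplus \# S'} \to C \to 0$, where $F$ is an injection of free modules of equal rank and $C$ fits in $0 \to \Cl_K^{T,-} \to C \to \bigoplus_v A_v^- \to 0$. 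The element $d := \det F$ is therefore a well-defined non-zero-divisor, and the eTNC is used in the proof of Theorem \ref{Main theorem 1} only to identify $d$ with an explicit product of $\omega_T^-$ and local factors; the shift-theoretic manipulations of \cite{Kata_05} applied to the above sequences are themselves unconditional.

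Running these manipulations without eTNC yields an unconditional formula of the shape $\Fitt_{\Z[G]^-}(\Cl_K^{T,-}) = (d) \cdot \parenth{\prod_{v} h_v^- \SFitt{1}_{\Z[G]^-}(A_v^-)} \cdot (\omega_T^-)^{-1}$ (as fractional ideals of $\Z[G]^-$). The unconditional Dasgupta-Kakde formula \eqref{eq:dual_Fitt}, combined with a parallel analysis on the Pontryagin-dual side, gives $\Fitt_{\Z[G]^-}(\Cl_K^{T,\vee,-}) = (d) \cdot \prod_{v} \parenth{\nu_{I_v}, 1 - \tfrac{\nu_{I_v}}{\# I_v}\varphi_v^{-1}}^- \cdot (\omega_T^-)^{-1}$ with the \emph{same} non-zero-divisor $d$: the two constructions are linked by transposing the square presentation $F$, which preserves its determinant. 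Granted this common-$d$ identity, the inclusion $\Fitt(\Cl_K^{T,-}) \subset \Fitt(\Cl_K^{T,\vee,-})$ reduces to the prime-by-prime comparison
\[
h_v^- \SFitt{1}_{\Z[G]^-}(A_v^-) \subset \parenth{\nu_{I_v}, 1 - \tfrac{\nu_{I_v}}{\# I_v}\varphi_v^{-1}}^-,
\]
which follows directly from Theorem \ref{Main theorem 2}: the left-hand side equals $\parenth{\nu_{I_v}, (1 - \tfrac{\nu_{I_v}}{\# I_v}\varphi_v^{-1})\JJ_v}^-$, and $\JJ_v$ is an integral ideal of $\Z[G]$.

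For the equality case, when $I_v$ is cyclic we may take $s = 1$ in \eqref{eq:I_decomp}, whereupon $\JJ_v = (1)$ by the example following Definition \ref{defn:J}; each local inclusion becomes an equality and the global one does too. The principal obstacle is the ``common-$d$'' identity: verifying unconditionally that the same $d$ governs both the primal and dual formulas. This should follow by running our construction alongside its Pontryagin-dual analogue and checking that the resulting presenting matrices are mutually transpose, an exercise in careful bookkeeping through the Ritter-Weiss-type complexes of \cite{RW96}, \cite{Gre07}, \cite{Kur20}; everything else is a formal consequence of Theorem \ref{Main theorem 2} and the structural properties of $\JJ_v$.
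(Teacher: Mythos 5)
Your architecture is the right one, and it is in fact the paper's: express both Fitting ideals with a common principal factor coming from the square presentation $F=\psi$ of rank $\#S'$, and reduce the comparison to the prime-by-prime inclusion $\SFitt{1}_{\Z[G]^-}(A_v^-)\subset\SFittN{-1}_{\Z[G]^-}(A_v^-)$. Your treatment of that local step is correct and is exactly Corollary \ref{cor:Fitt1_vs_Fitt-1}: by Theorem \ref{Main theorem 2} and Proposition \ref{prop:Fitt-1} the inclusion amounts to $\JJ_v\subset\Z[G]$, with equality when $I_v$ is cyclic since then one can take $s=1$ and $\JJ_v=(1)$.

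The genuine gap is the step you yourself flag as the ``principal obstacle'': the claim that the \emph{same} $d$ governs the dual formula. Your proposed verification --- transpose the square presentation $F$ and bookkeep through the Ritter--Weiss complexes --- does not work as stated, because $\Cl_K^{T,-}$ is not $\Coker F$; it is only a \emph{submodule} of $\Coker F$ (Proposition \ref{coker psi}). Transposing $F$ computes the dual of $\Coker\psi$, not of $\Cl_K^{T,-}$, and passing from the dual of the ambient module to the dual of the submodule is precisely where the shift machinery is needed. The paper's mechanism is the identity $\Fitt_{\Z[G]^-}(\Cl_K^{T,\vee,-})=\SFittN{-2}_{\Z[G]^-}(\Cl_K^{T,-})$ of \cite[Proposition 4.7]{Kata_05}, evaluated on the exact sequences of Proposition \ref{coker psi}; this yields the second formula of Corollary \ref{cor 1}, $\Fitt_{\Z[G]^-}(\Cl_K^{T,\vee,-})=\Fitt_{\Z[G]^-}(\Coker\psi)\prod_{v}\SFittN{-1}_{\Z[G]^-}(A_v^-)$, with literally the same factor $\Fitt_{\Z[G]^-}(\Coker\psi)=(d)$ as in the primal formula, and with no arithmetic input at all. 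Your appeal to the unconditional Dasgupta--Kakde formula \eqref{eq:dual_Fitt} is a detour that cannot close the argument: knowing $\Fitt(\Cl_K^{T,\vee,-})$ explicitly helps only if one also knows $(d)$ explicitly, which is exactly the eTNC input one is trying to avoid. A smaller point: your displayed ``unconditional formula'' for $\Fitt(\Cl_K^{T,-})$ carries spurious factors $\prod_v h_v^-$ and $(\omega_T^-)^{-1}$; the correct unconditional statement is $\Fitt_{\Z[G]^-}(\Cl_K^{T,-})=(d)\prod_{v\in S'\setminus S_\infty(k)}\SFitt{1}_{\Z[G]^-}(A_v^-)$. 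This slip is harmless for the comparison, since the same extraneous factors would appear on both sides, but it should be corrected.
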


This theorem follows immediately from Corollaries \ref{cor 1} and \ref{cor:Fitt1_vs_Fitt-1}.
Furthermore, by similar arguments as the proof of Theorem \ref{Main theorem 3}, we can observe that the inclusion is often proper.

As already remarked, Dasgupta-Kakde \cite{DK20} proved the formula \eqref{eq:dual_Fitt} {\it unconditionally}.
Therefore, if $I_v$ is cyclic for every $v \in S_{\ram} (K/k) \setminus S_{\infty}(k)$, we can also deduce from Theorem \ref{thm:vs_dual} that $\Fitt_{\Z[G]^-} \parenth{ \Cl_K^{T, -} }$ also coincides with that ideal, and this removes the assumption on eTNC in Theorem \ref{Main theorem 1}.
However, in Theorem \ref{Main theorem 1} we still need to assume eTNC when $I_v$ is not cyclic for some $v$.

%%%%%%%%%%%%%%%%%%%%%
\section{Definition of Fitting ideals and their shifts}\label{sec:SFitt}
%%%%%%%%%%%%%%%%%%%%%

In this section, we fix our notations concerning Fitting ideals.

%%%%%%%%%%%%%%%%%%%%%
\subsection{Fitting ideals}
%%%%%%%%%%%%%%%%%%%%%

Let $R$ be a noetherian ring.

\begin{defn}[cf. Northcott \cite{Nor76}]
We define the Fitting ideals as follows.
\begin{itemize}
\item[(i)]
Let $A$ be a matrix over $R$ with $m$ rows and $n$ columns.
For each integer $0 \leq i \leq n$, we define $\Fitt_{i, R}(A)$ as the ideal of $R$ generated by the $(n-i) \times (n-i)$ minors of $A$.
For each integer $i > n$, we also define $\Fitt_{i, R}(A) = (1)$.
\item[(ii)]
Let $X$ be a finitely generated $R$-module.
We choose a finite presentation $A$ of $X$ with $m$ rows and $n$ columns, that is, an exact sequence
\[
R^m \overset{\times A}{\to} R^n \to X \to 0.
\]
Here and henceforth, as a convention, we deal with row vectors, so we multiply matrices from the right.
Then, for each $i \geq 0$, we define the $i$-th Fitting ideal of $X$ by
\[
\Fitt_{i, R}(X) = \Fitt_{i, R}(A).
\]
It is known that this ideal does not depend on the choice of $A$.
When $i = 0$, we also write $\Fitt_{R}(X) = \Fitt_{0, R}(X)$ and call it the initial Fitting ideal.
\end{itemize}

\end{defn}

We will later make use of the following elementary lemma.
We omit the proof (cf. Kurihara \cite[Lemma 3.3]{Kur03b}).

\begin{lem}\label{lem:hFitt}
Let $X$ be a finitely generated $R$-module and $I$ be an ideal of $R$.
If $X$ is generated by $n$ elements over $R$, then
\[
\Fitt_{0, R}(X/IX) = \sum_{i = 0}^{n} I^i \Fitt_{i, R}(X).
\]
\end{lem}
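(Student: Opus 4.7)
My plan is to work directly from an explicit presentation. First, I will choose a finite presentation $R^m \xrightarrow{A} R^n \to X \to 0$ (possible since $X$ is generated by $n$ elements over the noetherian ring $R$), and fix a finite generating set $r_1, \dots, r_t$ of $I$. Then $X/IX$ admits the $R$-presentation
\[
R^m \oplus R^{tn} \xrightarrow{\widetilde{A}} R^n \to X/IX \to 0,
\]
where $\widetilde{A}$ is obtained from $A$ by appending the rows $r_k e_j$ for $1 \leq k \leq t$, $1 \leq j \leq n$. Hence $\Fitt_{0, R}(X/IX)$ is the ideal generated by the $n \times n$ minors of $\widetilde{A}$.

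The main step will be to analyze such minors by cofactor expansion along the rows coming from the appended part. Suppose an $n \times n$ minor is formed by taking $i$ rows of $A$ together with $n - i$ rows $r_{k_\ell} e_{j_\ell}$. I expect the minor to vanish unless the column indices $j_1, \dots, j_{n-i}$ are pairwise distinct, in which case it should equal $\pm\, r_{k_1} \cdots r_{k_{n-i}} \cdot M$, where $M$ is the $i \times i$ minor of $A$ obtained by restricting the chosen rows of $A$ to the columns complementary to $\{j_1, \dots, j_{n-i}\}$. Since the $i \times i$ minors of $A$ generate $\Fitt_{n - i, R}(X)$, reindexing by $j = n - i$ will yield the inclusion
\[
\Fitt_{0, R}(X/IX) \subset \sum_{j = 0}^{n} I^j \Fitt_{j, R}(X).
\]

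For the reverse inclusion, I will observe that $I^j \Fitt_{j, R}(X)$ is generated, as an ideal of $R$, by the products $r_{k_1} \cdots r_{k_j} \cdot M$ with $M$ an $(n-j) \times (n-j)$ minor of $A$, and that each such product arises, up to sign, as an $n \times n$ minor of $\widetilde{A}$ by selecting the $n - j$ rows of $A$ that define $M$ together with $j$ appended rows $r_{k_\ell} e_{j_\ell}$ indexed by the complementary columns. Combined with the previous step, this will give the desired equality.

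There is no serious conceptual obstacle: the argument is essentially combinatorial bookkeeping with Leibniz-style expansion of determinants. The one point requiring care is the vanishing of the minor when two of the $j_\ell$'s coincide, without which the first inclusion would falsely produce elements not in the stated sum; apart from this, the signs and multiplicities fall out automatically from cofactor expansion.
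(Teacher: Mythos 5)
Your proof is correct. The paper itself omits the proof of this lemma (referring to Kurihara's Lemma 3.3), and your argument --- presenting $X/IX$ by augmenting a presentation matrix of $X$ with the rows $r_k e_j$ and then computing the $n\times n$ minors by cofactor expansion along those single-entry rows, including the vanishing when two appended rows share a column --- is exactly the standard proof being alluded to, with both inclusions handled properly.
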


%\begin{proof}
%Let $A$ be a presentation of $X$ over $R$ with $m$ rows and $n$ columns.
%Let $r_1, \dots, r_t$ be a generator of $I$ as an ideal.
%Then $X/IX$ has a presentation
%\[
%A_I = \begin{pmatrix}
%\begin{matrix}
%r_1 &&\\
%\vdots &&\\
%r_t &&\\
%& \ddots &\\
%& & r_1\\
%& & \vdots\\
%& & r_t
%\end{matrix}\\
%A
%\end{pmatrix}
%\]
%with $m + nt$ rows and $n$ columns.
%By looking at the minors of $A_I$, we obtain
%\[
%\Fitt_{0, R}(A_I)
%= \sum_{i = 0}^{n} I^i \Fitt_{i, R}(A).
%\]
%This proves the lemma.
%\end{proof}

%%%%%%%%%%%%%%%%%%%%%
\subsection{Shifts of Fitting ideals}\label{subsec:SFitt}
%%%%%%%%%%%%%%%%%%%%%

In this subsection, we review the definition of shifts of Fitting ideals introduced by the second author \cite{Kata_05}.

Although we can deal with a more general situation, for simplicity we consider the following.
Let $\Lambda$ be a Dedekind domain (e.g. $\Lambda = \Z$, $\Z[1/2]$, or $\Z_p$).
Let $\Delta$ be a finite abelian group and consider the ring $R = \Lambda[\Delta]$.

We define $\CC$ as the category of $R$-modules of finite length.
We also define a subcategory $\PP$ of $\CC$ by
\[
\PP = \{P \in \CC \mid \pd_{R}(P) \leq 1\},
\]
where $\pd_{R}$ denotes the projective dimension over $R$.
Note that any module $M$ in $\CC$ satisfies $\pd_{\Lambda}(M) \leq 1$.

\begin{defn}\label{defn:SFitt}
Let $X$ be an $R$-module in $\CC$ and $d \geq 0$ an integer.
We take an exact sequence
\[
0 \to Y \to P_1 \to \dots \to P_d \to X \to 0
\]
in $\CC$ with $P_1, \dots, P_d \in \PP$.
Then we define
\[
\SFitt{d}_R(X) = \parenth{\prod_{i=1}^d \Fitt_R(P_i)^{(-1)^i}} \Fitt_R(Y).
\]
The well-definedness (i.e. the independence from the choice of the $n$-step resolution) is proved in \cite[Theorem 2.6 and Proposition 2.7]{Kata_05}.
\end{defn}

%Let $R$ be a noetherian ring and $S$ a multiplicative subset of $R$.
%We denote by $\MM$ the category of finitely generated $S$-torsion $R$-modules.
%We also define a subcategory $\PP$ of $\MM$ by
%\[
%\PP = \{P \in \MM \mid \pd_{R}(P) \leq 1\},
%\]
%where $\pd_{R}$ denotes the projective dimension over $R$.
%
%\begin{defn}\label{defn:SFitt}
%Let $X$ be an $R$-module in $\MM$ and $d \geq 0$ an integer.
%We take an exact sequence
%\[
%0 \to Y \to P_1 \to \dots \to P_d \to X \to 0
%\]
%of $R$-modules, with $P_1, \dots, P_d \in \PP$.
%Then we define
%\[
%\SFitt{d}_R(X) = \parenth{\prod_{i=1}^d \Fitt_R(P_i)^{(-1)^i}} \Fitt_R(Y).
%\]
%The well-definedness (i.e. the independence from the choice of the $n$-step resolution) is proved in \cite[Theorem 2.6 and Proposition 2.7]{Kata_05}.
%\end{defn}

We also introduce a variant for the case where $d$ is negative.

\begin{defn}\label{defn:SFittN}
Let $X$ be an $R$-module in $\CC$ and $d \leq 0$ an integer.
We take an exact sequence
\[
0 \to X \to P_1 \to \dots \to P_{-d} \to Y \to 0
\]
in $\CC$ with $P_1, \dots, P_{-d} \in \PP$.
Then we define
\[
\SFittN{d}_R(X) = \parenth{\prod_{i=1}^{-d} \Fitt_R(P_i)^{(-1)^i}} \Fitt_R(Y).
\]
The well-definedness is proved in \cite[Theorem 3.19 and Propositions 2.7 and 3.17]{Kata_05}.
\end{defn}

%%%%%%%%%%%%%%%%%%%%%
\section{Fitting ideals of ideal class groups}\label{sec:thm1}
%%%%%%%%%%%%%%%%%%%%%

In this section, we prove Theorem \ref{Main theorem 1}, which describes the Fitting ideal of $\Cl_K^{T, -}$
using shifts of Fitting ideals. 
We keep the notation in \S \ref{Intro1}.

%%%%%%%%%%%%%%%%%%%%%
\subsection{Brief review of work of Kurihara}
%%%%%%%%%%%%%%%%%%%%%

We first review necessary ingredients from Kurihara \cite{Kur20}, which in turn relies on preceding work, in particular Ritter-Weiss \cite{RW96} and Greither \cite{Gre07}.

For each place $w$ of $K$, 
let $D_w$ and $I_w$ denote the decomposition subgroup and the inertia subgroup of $w$ in $G$, respectively. 
These subgroups depend only on the place of $k$ which lies below $w$.

Let us introduce local modules $W_v$.
For any finite group $H$, we define $\Delta H$ as the augmentation ideal in $\Z [H]$. 

\begin{defn}\label{defn:W_v}
For each finite prime $w$ of $K$, we define a $\Z[D_w]$-module $W_{K_w}$ by 
\begin{equation}\label{eq:defn_W}
W_{K_w} = \{(x, y) \in \Delta D_w \oplus \Z[D_w / I_w] \mid \ol{x} = (1 - \varphi_v^{-1}) y \},
\end{equation}
where $\bar{x}$ denotes the image of $x$ in $\Z[D_w/I_w]$.
%$$ W_{K_w} = \ker (\Delta D_w \times \Z[D_w / I_w] \longrightarrow \Z [D_w / I_w]),$$ 
%where the above homomorphism is defined by $ (x,y) \mapsto \bar{x} + (\varphi_w^{-1} -1)y$ with 
%$\bar{x} = x$ mod $I_w$ and the Frobenius $\varphi_w$ of $w$ in $D_w / I_w$. 
For each finite prime $v$ of $k$, 
we define the $\Z[G]$-module $W_v$ by taking the direct sum as 
\[
W_v = \bigoplus_{w | v} W_{K_w},
\] 
where $w$ runs over the finite primes of $K$ which lie above $v$.
Alternatively, $W_v$ can be regarded as the induced module of $W_{K_w}$ from $D_w$ to $G$, as long as we choose a place $w$ of $K$ above $v$.
\end{defn}

We take an auxiliary finite set $S'$ of places of $k$ satisfying the following conditions.
\begin{itemize}
\item
$S' \supset S_{\ram} (K/k)$.
\item
$S' \cap T = \emptyset$.
\item
$\Cl_{K, S'}^T = 0$, 
where $\Cl_{K, S'}^T = \Coker \parenth{K_T^\times \overset{\oplus \ord_w}{\longrightarrow} 
\bigoplus_{w \notin S'_K \cup T_K} \Z}$.
\item
$G$ is generated by the decomposition groups $D_v$ of $v$ for all $v \in S'$.
\end{itemize}
We define a $\Z[G]$-module $W_{S_\infty}$ by
$$ W_{S_\infty} = \bigoplus_{w \in S_{\infty}(K)} \Delta D_w \oplus \bigoplus_{v \in S' \setminus S_\infty(k)} W_v.$$
By using local and global class field theory, Kurihara constructed an exact sequence of the following form.

\begin{prop}[{Kurihara \cite[\S 2.2, sequence (5)]{Kur20}}]\label{prop:fundamental}
We have an exact sequence
$$
0 \longrightarrow  \mathfrak{A}^- \longrightarrow W_{S_\infty}^- \longrightarrow \Cl_K^{T, -}
\longrightarrow 0 , 
$$
where $\mathfrak{A}^-$ is a free $\Z [G]^-$-module of rank $\# S'$.
\end{prop}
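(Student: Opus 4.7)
The plan is to follow the construction of Kurihara \cite[\S 2.2]{Kur20}, which refines the Tate sequence techniques of Ritter-Weiss \cite{RW96} and Greither \cite{Gre07}.

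The starting point is a four-term exact sequence of $\Z[G]$-modules
\begin{equation*}
0 \to E_{S' \cup T} \to \nabla_{S',T} \to W_{S_\infty} \to \Cl_{K,S'}^T \to 0,
\end{equation*}
where $E_{S' \cup T}$ denotes the $(S' \cup T)$-units of $K$ satisfying the $T$-congruence condition. This sequence arises from the cohomology of the $(S' \cup T)$-modified idele class group: the local modules $W_{K_w}$ from Definition~\ref{defn:W_v} are designed so that the defining relation \eqref{eq:defn_W} realizes the local fundamental class at each finite $v \in S'$, while the augmentation ideals $\Delta D_w$ at archimedean $w$ play the analogous role; the module $\nabla_{S',T}$ globalizes these local pieces via the idele-class exact sequence.

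By the chosen property $\Cl_{K, S'}^T = 0$, the four-term sequence collapses to a three-term short exact sequence whose cokernel is $W_{S_\infty}$. To convert this cokernel to $\Cl_K^{T,-}$, as the proposition demands, I would combine with the excision sequence
\begin{equation*}
\bigoplus_{v \in S' \setminus S_\infty(k)} \bigoplus_{w|v} \Z \to \Cl_K^T \to \Cl_{K,S'}^T = 0,
\end{equation*}
which identifies the gap between $\Cl_K^T$ and $\Cl_{K,S'}^T$ with the divisor contributions at the finite primes in $S'$; these contributions match, through the second-coordinate projections of the $W_v$, the finite part of $W_{S_\infty}$. A diagram chase built from these two sequences, performed after passing to the minus component, then yields the required short exact sequence
\begin{equation*}
0 \to \mathfrak{A}^- \to W_{S_\infty}^- \to \Cl_K^{T,-} \to 0,
\end{equation*}
with $\mathfrak{A}^-$ constructed explicitly in terms of $E_{S' \cup T}^-$ and the local kernels.

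The main obstacle is to show that this $\mathfrak{A}^-$ is \emph{free} of rank exactly $\# S'$ over $\Z[G]^-$. Projectivity is essentially automatic from the Tate-sequence construction, since $\nabla_{S',T}^-$ is cohomologically trivial after inverting $2$. To upgrade projectivity to freeness one carries out a rank count on each Wedderburn component of $\Q[G]^-$: by the equivariant Dirichlet unit theorem, together with the hypothesis that $K_T^\times$ is torsion free, the module $E_{S' \cup T}^-$ has rank $\# S'$ on every simple factor, and the requirement that $G$ be generated by the decomposition groups $D_v$ for $v \in S'$ guarantees the uniformity of this rank. A standard cancellation argument over the semilocal ring $\Z[G]^-$ then promotes projectivity of the correct rank to genuine freeness, completing the construction.
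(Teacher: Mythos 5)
First, a structural point: the paper does not prove this proposition at all. It is imported verbatim from Kurihara \cite[\S 2.2, sequence (5)]{Kur20}, whose construction (building on Ritter--Weiss and Greither) is taken as a black box; the expected ``proof'' here is essentially the correct attribution, which you give. Your attempt to reconstruct the construction is in the right spirit: class field theory assembles the local modules $W_{K_w}$ into a four-term sequence, the hypotheses on $S'$ collapse it, and the minus component is where the unit group stops interfering. But as a proof it has concrete gaps.

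The central gap is that your four-term sequence is asserted, not constructed, and its shape is doubtful: in the Ritter--Weiss formalism the local module $\bigoplus_v W_v$ is the \emph{source} of a surjection onto a module $\nabla$ containing the class group, whereas your sequence has $\nabla_{S',T}$ surjecting onto $W_{S_\infty}$ (since $\Cl_{K,S'}^T=0$), which points the arrows the wrong way for producing the surjection $W_{S_\infty}^-\twoheadrightarrow\Cl_K^{T,-}$ that is the entire content of the proposition; the subsequent ``diagram chase'' with the excision sequence is exactly the step that has to be carried out and is only gestured at. Two further specific claims are wrong. The rank count: for an odd character $\psi$ of $G$ the Dirichlet $S$-unit theorem gives $\dim e_\psi(\mathcal{O}_{K,S'\cup T}^\times\otimes\mathbb{C})=\#\{v\in S'\cup T,\ v \text{ finite} : \psi|_{D_v}=1\}$ (archimedean places contribute nothing to odd components), which is neither $\#S'$ nor uniform in $\psi$; the module whose minus part has uniform rank $\#S'$ is $W_{S_\infty}^-$ itself (each $v\in S'$ contributes rank one), and it is the finiteness of $\Cl_K^{T,-}$ that then forces $\fA^-$ to have rank $\#S'$. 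Finally, $\Z[G]^-=\Z[1/2][G]/(1+j)$ is \emph{not} semilocal (it is a $\Z[1/2]$-order, with infinitely many maximal ideals), and ``projective of constant rank implies free'' fails for such rings in general, so the last step of your argument does not go through as stated. In the actual construction the freeness of $\fA^-$ is arranged at the outset, by taking the cohomologically trivial term in the Ritter--Weiss sequence to be free; this is where the hypotheses that $G$ is generated by the $D_v$ for $v\in S'$ and that $K_T^\times$ is torsion-free (so that $\mu_K$, which lies entirely in the minus part, is killed) are actually used, rather than in a cancellation argument performed afterwards.
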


In \cite{Kur20}, the author took the linear dual of this sequence, and the resulting sequence played an important role to study $\Cl_K^{T, \vee, -}$.
In this paper, we do not take the linear dual but instead study the sequence itself for the proof of Theorem \ref{Main theorem 1}.

%%%%%%%%%%%%%%%%%%%%%
\subsection{Definition of $f_v$}
%%%%%%%%%%%%%%%%%%%%%

Our key ingredient for the proof of Theorem \ref{Main theorem 1} is the following homomorphism $f_v$.

\begin{defn}
For a finite prime $w$ of $K$, we define a $\Z[D_w]$-homomorphism $$f_w : W_{K_w} \longrightarrow \Z[D_w]$$ by 
$f_w(x,y) = x + \nu_{I_w} (y)$ (recall the definition of $W_{K_w}$ in Definition \ref{defn:W_v}).
For a finite prime $v$ of $k$, 
we then define a $\Z[G]$-homomorphism 
$f_v : W_v \longrightarrow \Z [G]$ by 
\begin{eqnarray}\label{f_v}
f_v : W_v = \bigoplus_{w | v} W_{K_w} \overset{ \oplus f_w}{\longrightarrow} \bigoplus_{w|v} \Z[D_w] \simeq \Z[G],\end{eqnarray} 
where the last isomorphism depends on a choice of $w$.
\end{defn}

In \S \ref{Intro1} we introduced a finite $\Z[G]$-module $A_v = \Z[G/I_v]/(g_v)$ with $g_v = 1 - \varphi_v^{-1} + \# I_v$.
It is actually motivated by the following.

\begin{lem} \label{lemma A_v}
For any finite prime $v$ of $k$, the map $f_v$ is injective and 
$$\Coker f_v \simeq A_v.$$
\end{lem}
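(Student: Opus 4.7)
The plan is to reduce the global statement to a purely local one and then verify the local statement by a direct computation. Since $G$ is abelian, the decomposition and inertia subgroups $D_w$ and $I_w$ depend only on $v$, not on the choice of $w\mid v$, so once a prime $w\mid v$ is fixed, $W_v$ is naturally identified with the induced module $\Z[G]\otimes_{\Z[D_v]} W_{K_w}$, and the decomposition $\bigoplus_{w'\mid v}\Z[D_{w'}]\simeq\Z[G]$ matches $\Z[G]\otimes_{\Z[D_v]}\Z[D_v]$. Under these identifications $f_v$ is the induction of $f_w$. Since $\Z[G]$ is free over $\Z[D_v]$, induction is exact, so both injectivity of $f_v$ and the shape of its cokernel follow from the corresponding local facts. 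Moreover $\Z[G]\otimes_{\Z[D_v]}\Z[D_v/I_v]/(g_v)\simeq\Z[G/I_v]/(g_v)=A_v$, which gives the desired description of $\Coker f_v$ once we know $\Coker f_w\simeq\Z[D_v/I_v]/(g_v)$.

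Write $H=D_v$, $I=I_v$, $\varphi=\varphi_v$. For injectivity of $f_w$, suppose $(x,y)\in W_{K_w}$ satisfies $x+\nu_I y=0$ in $\Z[H]$. Projecting to $\Z[H/I]$ and using the defining relation $\overline{x}=(1-\varphi^{-1})y$ gives $g_v\cdot y=0$. A short character computation shows $g_v=1-\varphi^{-1}+\#I$ is a non-zero-divisor in $\Z[H/I]$ (the real part of $\chi(g_v)$ is at least $\#I\geq 1$ for every character $\chi$ of $H/I$), so $y=0$ and then $x=0$.

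For the cokernel, the natural projection $\Z[H]\twoheadrightarrow\Z[H/I]\twoheadrightarrow\Z[H/I]/(g_v)$ kills $\Imag f_w$, because $x+\nu_I y$ maps to $\overline{x}+\#I\cdot y=(1-\varphi^{-1})y+\#I\cdot y=g_v y$. This yields a surjection $\pi\colon\Coker f_w\twoheadrightarrow\Z[H/I]/(g_v)$. To see that $\pi$ is injective, take $z\in\Z[H]$ whose image in $\Z[H/I]$ equals $g_v y$ for some $y\in\Z[H/I]$; fix a lift $\tilde y\in\Z[H]$ of $y$ and set $x=z-\nu_I\tilde y$. Then $\overline{x}=g_v y-\#I\cdot y=(1-\varphi^{-1})y$, and because $1-\varphi^{-1}$ has augmentation $0$, the element $\overline{x}$ lies in $\Delta(H/I)$; since the augmentation $\Z[H]\to\Z$ factors through $\Z[H/I]$, this forces $x\in\Delta H$. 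Thus $(x,y)\in W_{K_w}$ and $f_w(x,y)=z$, showing that the preimage of $(g_v)$ is exactly $\Imag f_w$.

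The injectivity step and the construction of the surjection $\pi$ are essentially formal; the main technical point is the last paragraph, namely producing a preimage of a given $z$ under $f_w$. The subtlety is that one must split $z$ into a piece $x\in\Delta H$ and a piece $\nu_I\tilde y$ so that the constraint $\overline{x}=(1-\varphi^{-1})y$ is satisfied — the trick is simply to choose $\tilde y$ lifting $y$ and subtract $\nu_I\tilde y$, using the compatibility of augmentations on $\Z[H]$ and $\Z[H/I]$ to check that the remainder automatically lies in $\Delta H$. Once this is in place, the local assertion is proved, and the global statement follows by induction from $\Z[D_v]$ to $\Z[G]$ as outlined above.
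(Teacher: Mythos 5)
Your proof is correct and follows essentially the same route as the paper's: reduce to the local map $f_w$ by inducing from $\Z[D_v]$ to $\Z[G]$, observe that $f_w(x,y)$ projects to $g_v y$ in $\Z[D_w/I_w]$, and use that $g_v$ is a non-zero-divisor. The paper packages the cokernel computation as a snake-lemma diagram via the kernel of $\Z[D_w]\to\Z[D_w/I_w]$ rather than your direct element chase, but the content is identical, and your explicit character argument for $g_v$ being a non-zero-divisor is a welcome detail the paper leaves implicit.
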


\begin{proof}
It is enough to show that $f_w$ is injective and $\Coker f_w \simeq \Z [D_w / I_w] / (g_v)$ 
for any finite prime $w$ of $K$.
Put $J_w = \Ker (\Z[D_w] \longrightarrow \Z[D_w / I_w])$. 
We define a homomorphism $\alpha_w: J_w \to W_{K_w}$ by $\alpha_w(x) = (x,0)$.
Let us consider the following commutative diagram 
$$
\begin{CD}
0 @>>> J_w @>\alpha_w>> W_{K_w} @>>> \Coker \; \alpha_w @>>> 0 @. \\
@. @| @VVf_wV @VVf'_wV @.  \\
 0  @>>> J_w @>>> \Z[D_w] @>>> \Z[D_w/I_w] @>>> 0 @.  , \\
\end{CD}
$$
where the lower sequence is the trivial one, the commutativity of the left square is easy, and the right vertical arrow is the induced one.
By the definition of $W_{K_w}$, we have
\begin{eqnarray*}
\Coker \; \alpha_w = 
\{ (\bar{x},y) \in \Delta (D_w / I_w) \times \Z[D_w / I_w] \mid \ol{x} = (1 - \varphi_v^{-1})y  \}. 
\end{eqnarray*}
Since $D_w / I_w$ is a cyclic group generated by $\varphi_v^{-1}$, 
the $\Z[D_w / I_w]$-module $\Coker \; \alpha_w $ is free of rank $1$ with a basis $(1- \varphi_v^{-1}, 1)$.
Moreover, $f_w'$ sends this basis to $g_v = 1 - \varphi_v^{-1} + \# I_v$.
Therefore, $f_w'$ is injective with cokernel isomorphic to $\Z[D_w/I_w]/(g_v)$.
Then by the diagram $f_w$ also satisfies the desired properties.
\end{proof}

For any $v \in S' \setminus S_\infty(k)$, 
we consider the homomorphism $f_v^- : W_v^- \longrightarrow \Z[G]^-$ which is induced by $f_v$.  
For any $v \in S_\infty(k)$, 
we have $( \oplus_{w \mid v} \Delta D_w)^- \simeq \Z[G]^-$ by choosing $w$, so we fix this isomorphism and write $f_v^-$ for it. 
Using these $f_v^-$, 
we consider the following commutative diagram
$$
\begin{CD}
0 @>>>  \mathfrak{A}^- @>>> W_{S_\infty}^- @>>> \Cl_K^{T, -} @>>> 0 @. \\
@. @|  @VV\oplus_{v \in S'} f_v^-V @VVV @.  \\
0 @>>> \mathfrak{A}^- @>\psi>> \bigoplus_{v \in S'} \Z[G]^- @>>> \Coker \psi  @>>> 0 @.  , \\
\end{CD}
$$
where the upper sequence is that in Proposition \ref{prop:fundamental} and the map $\psi$ is defined by the commutativity.
By Lemma \ref{lemma A_v} and the snake lemma, we get the following proposition.  

\begin{prop}\label{coker psi} 
We have an exact sequence
$$ 0 \longrightarrow  \Cl_K^{T, -} 
\longrightarrow  \Coker  \psi 
\longrightarrow \bigoplus_{v \in S' \setminus S_\infty(k)}A_v^- 
\longrightarrow 0. $$
Moreover, the $\Z[G]^-$-module $\Coker \psi$ is finite with $\pd_{\Z[G]^-}(\Coker \psi) \leq 1$.
\end{prop}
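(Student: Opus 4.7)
The plan is to deduce both claims directly from the snake lemma applied to the displayed commutative diagram, using Lemma~\ref{lemma A_v} to compute the cokernel of the middle vertical map $\oplus_{v \in S'} f_v^-$. First I would analyze this vertical arrow term by term: for $v \in S_\infty(k)$ the component $f_v^-$ is an isomorphism by construction, while for $v \in S' \setminus S_\infty(k)$, Lemma~\ref{lemma A_v} asserts that $f_v$ is injective with cokernel $A_v$, and inverting $2$ and passing to the minus part preserves both of these properties. Summing over $v \in S'$, the middle vertical map is therefore injective with cokernel $\bigoplus_{v \in S' \setminus S_\infty(k)} A_v^-$; the left vertical map is the identity, so its kernel and cokernel both vanish.

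Next I would apply the snake lemma to the diagram. Because the kernel and cokernel of the left vertical vanish and the middle vertical is injective, the resulting six-term sequence collapses to the desired
\[
0 \longrightarrow \Cl_K^{T,-} \longrightarrow \Coker \psi \longrightarrow \bigoplus_{v \in S' \setminus S_\infty(k)} A_v^- \longrightarrow 0.
\]

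For the second assertion, I would first note that $\psi$ is injective, since by construction it factors as the composite $\fA^- \hookrightarrow W_{S_\infty}^- \xrightarrow{\oplus f_v^-} \bigoplus_{v \in S'} \Z[G]^-$ of two injective maps. Hence the bottom row of the diagram is a short exact sequence $0 \to \fA^- \to \bigoplus_{v \in S'} \Z[G]^- \to \Coker \psi \to 0$ whose first two terms are free $\Z[G]^-$-modules (the freeness of $\fA^-$ is part of Proposition~\ref{prop:fundamental}). This is a projective resolution of length one, so $\pd_{\Z[G]^-}(\Coker \psi) \leq 1$. Finiteness of $\Coker \psi$ then follows from the exact sequence above, together with the finiteness of $\Cl_K^{T,-}$ and of each $A_v^-$; the latter holds because $g_v$ is a non-zero-divisor in $\Q[G/I_v]$ (evaluating at any character $\chi$ of $G/I_v$ gives $1 - \chi(\varphi_v)^{-1} + \# I_v \neq 0$ by comparing absolute values). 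There is no real obstacle here: all the substantive work is already packaged in Lemma~\ref{lemma A_v}, and the proposition is essentially a formal consequence combined with the snake lemma.
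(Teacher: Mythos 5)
Your proof is correct and follows exactly the route the paper takes (the paper merely says "By Lemma \ref{lemma A_v} and the snake lemma, we get the following proposition", and your write-up supplies precisely the details behind that sentence: exactness of the minus functor, the snake lemma collapsing to the stated short exact sequence, injectivity of $\psi$ as a composite giving the length-one free resolution, and finiteness from the two ends of the sequence). No gaps.
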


By Proposition \ref{coker psi}, the Fitting ideal $\Fitt_{\Z[G]^-} (\Coker  \psi)$ is a principal ideal of $\Z[G]^-$ generated by a non-zero-divisor.
Then we can describe the Fitting ideals of $\Cl_K^{T,-}$ and of $\Cl_K^{T, \vee, -}$ as follows. 

\begin{cor} \label{cor 1}
We have 
\[ \Fitt_{\Z[G]^-} ( \Cl_K^{T, -} ) 
= \Fitt_{\Z[G]^-} (\Coker \psi) 
\prod_{v \in S' \setminus S_{\infty}(k)} \SFitt{1}_{\Z[G]^-} \parenth{A_v^-}
\]
and
\[ \Fitt_{\Z[G]^-} ( \Cl_K^{T, \vee, -} ) 
= \Fitt_{\Z[G]^-} (\Coker \psi) 
\prod_{v \in S' \setminus S_{\infty}(k)} \SFittN{-1}_{\Z[G]^-} \parenth{A_v^-}.
\]
\end{cor}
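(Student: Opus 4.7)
The plan is to derive both formulas directly from the exact sequence
\[
0 \to \Cl_K^{T,-} \to \Coker \psi \to \bigoplus_{v \in S' \setminus S_\infty(k)} A_v^- \to 0
\]
supplied by Proposition \ref{coker psi}, by feeding it into the definitions of the shifts of Fitting ideals from \S \ref{subsec:SFitt}. Throughout I will use two standard formal properties: multiplicativity of $\Fitt$ on direct sums (valid both for modules in $\PP$ and for general finite modules in $\CC$), and the consequent multiplicativity of $\SFitt{1}$ and $\SFittN{-1}$, which one obtains by taking direct sums of resolutions.

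Since $\Coker \psi \in \PP$ by Proposition \ref{coker psi}, the above sequence is exactly a one-step forward $\PP$-resolution of $\bigoplus_v A_v^-$ in the sense of Definition \ref{defn:SFitt} with $d = 1$. Reading off that definition yields
\[
\SFitt{1}_{\Z[G]^-}\!\Bigl(\bigoplus_v A_v^-\Bigr) = \Fitt_{\Z[G]^-}(\Coker \psi)^{-1}\,\Fitt_{\Z[G]^-}(\Cl_K^{T,-}),
\]
which, after rearrangement and using multiplicativity of $\SFitt{1}$ over the direct sum, gives the first displayed equality. For the second, I will apply the Pontryagin dual (in the convention of \S \ref{Intro1}) to the same exact sequence. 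Since $(-)^\vee$ is exact and preserves $\PP$, it produces
\[
0 \to \Bigl(\bigoplus_v A_v^-\Bigr)^{\!\vee} \to (\Coker \psi)^\vee \to \Cl_K^{T,\vee,-} \to 0,
\]
which is now a one-step backward $\PP$-resolution of $(\bigoplus_v A_v^-)^\vee$ in the sense of Definition \ref{defn:SFittN} with $d = -1$. Applying the definition and rearranging, I obtain
\[
\Fitt_{\Z[G]^-}(\Cl_K^{T,\vee,-}) = \Fitt_{\Z[G]^-}\!\bigl((\Coker \psi)^\vee\bigr) \cdot \SFittN{-1}_{\Z[G]^-}\!\Bigl(\bigl(\bigoplus_v A_v^-\bigr)^{\!\vee}\Bigr).
\]

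To reach the desired form of the second equality, I will invoke the identifications $\Fitt((\Coker \psi)^\vee) = \Fitt(\Coker \psi)$ (invariance of the initial Fitting ideal of $\PP$-modules under $(-)^\vee$ in the present convention) and $(A_v^-)^\vee \cong A_v^-$ as $\Z[G]^-$-modules for each $v$ separately. The latter falls out of the presentation $0 \to \Z[G/I_v]^- \xrightarrow{g_v^-} \Z[G/I_v]^- \to A_v^- \to 0$: because $g_v$ is a non-zero-divisor, Pontryagin dualising recovers a sequence of the same shape, canonically identifying the cokernels. Combined with multiplicativity of $\SFittN{-1}$ on direct sums, this completes the derivation. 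The sole genuine obstacle is this last Pontryagin self-duality step, which is precisely what the paper's adoption of the action $(\sigma f)(x) = f(\sigma x)$ on duals in \S \ref{Intro1} is designed to deliver; the rest is formal bookkeeping with the definitions.
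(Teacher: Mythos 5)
Your proof is correct, and it splits naturally into two halves of different character. For the first formula your argument is exactly the paper's: the sequence of Proposition \ref{coker psi} is a one-step $\PP$-resolution of $\bigoplus_v A_v^-$ in the sense of Definition \ref{defn:SFitt} (with $P_1 = \Coker\psi$, $Y = \Cl_K^{T,-}$), and multiplicativity of $\Fitt$ and of $\SFitt{1}$ over direct sums finishes it.

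For the second formula you take a genuinely different route. The paper invokes the general duality statement $\Fitt_{\Z[G]^-}(\Cl_K^{T,\vee,-}) = \SFittN{-2}_{\Z[G]^-}(\Cl_K^{T,-})$ from \cite[Proposition 4.7]{Kata_05} and then evaluates $\SFittN{-2}$ on the \emph{undualized} sequence. You instead dualize the sequence by hand and read off the $d=-1$ case of Definition \ref{defn:SFittN}, at the cost of two self-duality inputs: $\Fitt_{\Z[G]^-}((\Coker\psi)^\vee) = \Fitt_{\Z[G]^-}(\Coker\psi)$ and $(A_v^-)^\vee \cong A_v^-$. Both are correct under the paper's convention $(\sigma f)(x) = f(\sigma x)$, which (as you observe) is chosen precisely to suppress the involution: a module $P \in \PP$ presented by a square matrix $B$ with non-zero-divisor determinant has $P^\vee$ presented by the transpose of $B$, so $P^\vee \in \PP$ and $\Fitt(P^\vee) = (\det B) = \Fitt(P)$; the $1\times 1$ case $B=(g_v)$ over $\Z[G/I_v]^-$ gives the self-duality of $A_v^-$. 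One cosmetic imprecision: dualizing $0 \to \Z[G/I_v]^- \to \Z[G/I_v]^- \to A_v^- \to 0$ does not literally return a sequence of the same shape (the free terms become divisible); the clean statement is $(R/(g))^\vee \cong \Ker\bigl(g \mid R\otimes\Q/\Z\bigr) \cong g^{-1}R/R \cong R/(g)$. What your route buys is self-containedness — you only ever use the $d=\pm 1$ cases of the shift definitions, whose well-definedness is immediate from a short exact sequence, and you avoid the cited Proposition 4.7 entirely; what it costs is the explicit bookkeeping that Pontryagin duality is exact, preserves $\PP$, and fixes $A_v^-$ up to isomorphism.
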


\begin{proof}
The first formula follows directly from Proposition \ref{coker psi} and Definition \ref{defn:SFitt}.
For the second formula, by \cite[Proposition 4.7]{Kata_05}, we have
\[
\Fitt_{\Z[G]^-}(\Cl_K^{T, \vee, -}) 
= \SFittN{-2}_{\Z[G]^-}(\Cl_K^{T, -}).
\]
By Proposition \ref{coker psi} and Definition \ref{defn:SFittN}, we also have
\begin{align}
\SFittN{-2}_{\Z[G]^-}(\Cl_K^{T, -})
%& = \Fitt_{\Z[G]^-} (\Coker \psi) \SFittN{-1}_{\Z[G]^-} \parenth{\bigoplus_{v \in S' \setminus S_\infty(k)}A_v^-}\\
& = \Fitt_{\Z[G]^-} (\Coker \psi) \prod_{v \in S' \setminus S_\infty(k)} \SFittN{-1}_{\Z[G]^-} \parenth{A_v^-}.
\end{align}
This completes the proof.
\end{proof}

%%%%%%%%%%%%%%%%%%%%%
\subsection{Fitting ideal of $\Coker \psi$}\label{subsec:Fitt psi}
%%%%%%%%%%%%%%%%%%%%%

Recall the definitions of $\omega_T$ and of $h_v$ in \S \ref{Intro1}.

\begin{thm} \label{Kurihara}
Assume that eTNC for $K/k$ holds. 
Then we have
\[
\Fitt_{\Z[G]^-} (\Coker  \psi) = \parenth{ \parenth{\prod_{v \in S' \setminus S_\infty(k)} h_v^-}  \omega_T^- }.
\]
\end{thm}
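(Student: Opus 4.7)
The plan is to deduce Theorem \ref{Kurihara} by combining Kurihara's formula \eqref{eq:dual_Fitt} for $\Fitt(\Cl_K^{T, \vee, -})$, which is proved in \cite[Corollary 3.7]{Kur20} under eTNC, with the second formula in Corollary \ref{cor 1}. Since $\fA^-$ and the target $\bigoplus_{v \in S'} \Z[G]^-$ are both free $\Z[G]^-$-modules of rank $\# S'$ and $\Coker \psi$ has finite length by Proposition \ref{coker psi}, the map $\psi$ is represented by a square matrix and $\Fitt_{\Z[G]^-}(\Coker \psi)$ is principal, generated by a non-zero-divisor; the task is to identify this principal ideal.

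Equating the second formula in Corollary \ref{cor 1},
\[
\Fitt_{\Z[G]^-}(\Cl_K^{T, \vee, -}) = \Fitt_{\Z[G]^-}(\Coker \psi) \prod_{v \in S' \setminus S_\infty(k)} \SFittN{-1}_{\Z[G]^-}(A_v^-),
\]
with the closed form provided by \eqref{eq:dual_Fitt}, and noting that all the fractional ideals involved are invertible, Theorem \ref{Kurihara} reduces, after cancelling the non-zero-divisor $\omega_T^-$, to the purely algebraic local identity
\[
h_v^- \cdot \SFittN{-1}_{\Z[G]^-}(A_v^-) = \parenth{\nu_{I_v},\ 1 - \frac{\nu_{I_v}}{\# I_v}\varphi_v^{-1}}^-
\]
for every finite prime $v$ of $k$. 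For unramified $v$ we have $I_v = 1$ and both sides reduce to $(1)$, which matches the fact that the product in \eqref{eq:dual_Fitt} is restricted to $S_{\ram}(K/k) \setminus S_\infty(k)$.

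To prove this local identity for a ramified $v$, the natural strategy is to construct an explicit short exact sequence $0 \to A_v^- \to P \to Q \to 0$ in $\CC$ with $P \in \PP$, using the presentation $A_v = \Z[G/I_v]/(g_v)$ with $g_v = 1 - \varphi_v^{-1} + \# I_v$, and then to read off $\Fitt_{\Z[G]^-}(P)$ and $\Fitt_{\Z[G]^-}(Q)$ from a suitable presentation matrix; when $I_v$ is cyclic, the two generators $\nu_{I_v}$ and $1 - \frac{\nu_{I_v}}{\# I_v}\varphi_v^{-1}$ on the right-hand side appear transparently. The main obstacle is the non-cyclic case, where $A_v^-$ does not in general have projective dimension $\leq 1$ over $\Z[G]^-$ and a longer resolution is required; this is precisely the setting addressed by Theorem \ref{Main theorem 2}, and one expects the same machinery, involving the decomposition \eqref{eq:I_decomp} and the auxiliary ideals $Z_i$ and $\JJ$ of Definition \ref{defn:J}, to govern $\SFittN{-1}_{\Z[G]^-}(A_v^-)$ and yield the identity.
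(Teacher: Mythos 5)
Your route is genuinely different from the paper's, but it has a gap at the crucial step. You propose to combine the second identity of Corollary \ref{cor 1} with the known formula \eqref{eq:dual_Fitt} and then ``cancel''. After substituting Proposition \ref{prop:Fitt-1} (which, incidentally, already covers non-cyclic $I_v$ via the two-term coresolution $0 \to A \to \Z[G]/(\wtil{g}) \to \Z[G]/(\wtil{g}, \nu_I) \to 0$; the machinery of $Z_i$ and $\JJ$ is needed only for $\SFitt{1}$, not for $\SFittN{-1}$), you arrive at an equality of the form
\[
X \cdot P = (\omega_T^-) \cdot P,
\qquad
X = \Fitt_{\Z[G]^-}(\Coker \psi) \prod_{v} (h_v^-)^{-1},
\qquad
P = \prod_{v} \parenth{\nu_{I_v}, 1 - \tfrac{\nu_{I_v}}{\# I_v}\varphi_v^{-1}}^-,
\]
and you need to cancel $P$. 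Your claim that ``all the fractional ideals involved are invertible'' is false whenever some $I_v$ is non-trivial. For instance, for $G = I = \Z/p$ over $\Z_p$ the relevant ideal is $(\nu_I, \# I) = (\nu, p)$, whose quotient $\F_p[\sigma-1]/((\sigma-1)^{p-1})$ has order $p^{p-1}$, while any principal ideal with the same character-wise valuations has quotient of order $p^{p}$; so it is not principal in this local ring, hence not invertible. Worse, cancellation genuinely fails for such $P$: the multiplier ring $\{z \in Q(R) : zP \subset P\}$ contains the idempotent $\nu_I/\# I$, so $z = u \frac{\nu_I}{\# I} + (1 - \frac{\nu_I}{\# I})$ with $u \in \Z_p^{\times}$, $u \not\equiv 1 \pmod p$, satisfies $zP = P$ while $(z) \neq (1)$. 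Thus $XP = (\omega_T^-)P$ does not yield $X = (\omega_T^-)$, and this is exactly the non-cyclic-inertia obstruction the paper is built around; the argument cannot be repaired by passing to characters either, since agreement of $\psi(X)$ and $\psi(\omega_T^-)$ for all $\psi$ does not determine a principal fractional ideal of $\Z[G]^-$.

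The paper proves the theorem in the opposite logical direction. Since $\psi$ is a square matrix between free $\Z[G]^-$-modules of rank $\# S'$, one computes $\det(\psi)$ directly: Kurihara's eTNC computation gives $\det(\Psi) = \parenth{\prod_v h_v^-}\omega_T^-$ for $\Psi \colon \fA^- \otimes \Q \to W_{S_\infty}^- \otimes \Q$ with respect to the bases $e_v'$, and these bases are normalized precisely so that $f_v(e_v') = 1$, whence composing with $\bigoplus_v f_v^-$ does not change the determinant. The dual formula \eqref{eq:dual_Fitt} is then \emph{recovered} from Theorem \ref{Kurihara} together with Proposition \ref{prop:Fitt-1} (Remark \ref{rem:dual_Fitt}), rather than used as input. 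If you want to keep your direction, you would need an argument that identifies the principal ideal $\Fitt_{\Z[G]^-}(\Coker\psi)$ itself, not just its product with the non-invertible ideal $P$ --- and that is essentially the determinant computation the paper performs.
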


\begin{proof}
For each $v \in S' \setminus S_{\infty}(k)$,
we define a basis $e_v$ of $\Hom_{\Q[G]}(W_v \otimes \Q, \Q[G])$ as in \cite[\S 2.2, equation (9)]{Kur20} (we do not recall the precise definition here).
Then we can see that its dual basis $e_v'$ of $W_v \otimes \Q$ is given by
\[
e_v' = \frac{1}{1 - \wtil{\varphi_v}^{-1} + N_{I_v}} (1 - \wtil{\varphi_v}^{-1}, 1),
\]
where $\wtil{\varphi_v}$ is a lift of $\varphi_v$.
Then, by the definition of $f_v$, this element satisfies $f_v(e_v') = 1$, where by abuse of notation $f_v$ denotes the homomorphism $W_v \otimes \Q \to \Q[G]$ induced by $f_v: W_v \to \Z[G]$.
For $v \in S_{\infty}(k)$, as a basis over $\Z[G]^-$, we take the element $e_v^{\prime, -}$ of $\parenth{\bigoplus_{w \mid v} \Delta D_w}^-$ which is characterized by $f_v^-(e_v^{\prime, -}) = 1$.

Let us consider the isomorphism $\Psi: \fA^- \otimes \Q \to W_{S_{\infty}}^- \otimes \Q$ induced by the sequence in Proposition \ref{prop:fundamental}.
Then, under eTNC, Kurihara \cite[Theorem 3.6]{Kur20} proved
\[
\det(\Psi) = \parenth{\prod_{v \in S' \setminus S_{\infty}(k)} h_v^- } \omega_T^-
\]
with respect to a certain basis of $\fA^-$ as a $\Z[G]^-$-module and the basis $(e_v^{\prime, -})_{v \in S'}$ of $W_{S_{\infty}}^-$.
Actually this is an easy reformulation of the result of Kurihara, which concerns the determinant of the linear dual of $\Psi$.

Therefore, the determinant of the composite map $\psi$ of $\Psi$ and $\bigoplus_{v \in S'} f_v^-$, with respect to the basis of $\fA^-$ and the standard basis of $(\Z[G]^-)^{\oplus \# S'}$, also coincides with $\parenth{\prod_{v \in S' \setminus S_{\infty}(k)} h_v^- } \omega_T^-$.
This shows the theorem.
\end{proof}

We are now ready to prove Theorem \ref{Main theorem 1}.
%Corollary \ref{cor 1} and Theorem \ref{Kurihara} imply the following result. 
%
%\begin{thm} \label{theorem1.1}
%Assume that eTNC for $K/k$ holds. 
%Then we have
%
%\[ \Fitt_{\Z[G]^-} ( \Cl_K^{T, -} ) 
%= \parenth{
%\prod_{v \in S_{\ram} (K/k) \setminus S_\infty(k)} h_v^- \SFitt{1}_{\Z[G]^-}  \parenth{ A_v^-}} \omega_T^-. 
%\]
%
%\end{thm}

\begin{proof}[Proof of Theorem \ref{Main theorem 1}]
By Corollary \ref{cor 1} and Theorem \ref{Kurihara}, 
we have 
\[ \Fitt_{\Z[G]^-} ( \Cl_K^{T, -} ) = \parenth{
\prod_{v \in S' \setminus S_\infty(k)} h_v^- \SFitt{1}_{\Z[G]^-}  \parenth{ A_v^-} } \omega_T^-. 
\]
For $v \in S' \setminus S_{\ram}(K/k)$, 
we have $A_v = \Z [G] / (h_v)$, so
$$ \SFitt{1}_{\Z[G]^-} \big( A_v^- \big) = (h_v^-)^{-1}. $$
Then Theorem \ref{Main theorem 1} follows.
\end{proof}

\begin{rem}\label{rem:dual_Fitt}
Similarly, under the validity of eTNC, Corollary \ref{cor 1} and Theorem \ref{Kurihara} also imply a formula
\[ \Fitt_{\Z[G]^-} ( \Cl_K^{T, \vee, -} ) 
= \parenth{
\prod_{v \in S_{\ram} (K/k) \setminus S_{\infty}(k)} h_v^- \SFittN{-1}_{\Z[G]^-}  \parenth{ A_v^-}} \omega_T^-. 
\]
Combining this with Proposition \ref{prop:Fitt-1} below, we can recover the formula \eqref{eq:dual_Fitt}.
This argument may be regarded as a reinterpretation of the work \cite{Kur20} by using the shifts of Fitting ideals.
\end{rem}

%%%%%%%%%%%%%%%%%%%%%
\section{Computation of shifts of Fitting ideals}\label{sec:Fitt1}
%%%%%%%%%%%%%%%%%%%%%

In this section, we prove Theorem \ref{Main theorem 2} on the description of $\SFitt{1}_{\Z[G]}(A)$.
We keep the notations as in \S \ref{Intro2}.

%%%%%%%%%%%%%%%%%%%%%
\subsection{Computation of $\SFittN{-1}_{\Z[G]}(A)$}\label{subsec:Fitt-1}
%%%%%%%%%%%%%%%%%%%%%

Before $\SFitt{1}_{\Z[G]}(A)$, we determine $\SFittN{-1}_{\Z[G]}(A)$, which is actually much easier.

We choose a lift $\wtil{\varphi} \in D$ of $\varphi$ and put
\[
\wtil{g} = 1 - \wtil{\varphi}^{-1} + \# I \in \Z[G],
\]
which is again a non-zero-divisor.
Obviously, $g$ is then the natural image of $\wtil{g}$ to $\Z[G/I]$.

\begin{prop}\label{prop:Fitt-1}
We have
\[
\SFittN{-1}_{\Z[G]}(A) = \parenth{1, \nu_I g^{-1}}.
\]
Therefore, we also have
\[
h \SFittN{-1}_{\Z[G]}(A) 
= \parenth{\nu_I, 1 - \frac{\nu_I}{\# I} \varphi^{-1} }.
\]
\end{prop}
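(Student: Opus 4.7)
The plan is to produce a short exact sequence
\[
0 \to A \to \Z[G]/(\wtil{g}) \to \Z[G]/(\nu_I, \wtil{g}) \to 0
\]
whose middle term lies in $\PP$, and then read off $\SFittN{-1}_{\Z[G]}(A)$ from Definition \ref{defn:SFittN}. The embedding $A \hookrightarrow \Z[G]/(\wtil{g})$ will be $\bar x \mapsto \overline{\nu_I \wtil{x}}$, well-defined because $\nu_I$ annihilates $\Delta I$ and $\nu_I \wtil{g} = \wtil{g} \nu_I$ (using that $G$ is abelian).

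I would construct and analyze this sequence by applying the snake lemma to
\[
\begin{CD}
0 @>>> \Z[G/I] @>{\times g}>> \Z[G/I] @>>> A @>>> 0 \\
@. @VV{\nu_I}V @VV{\nu_I}V @. \\
0 @>>> \Z[G] @>{\times \wtil{g}}>> \Z[G] @>>> \Z[G]/(\wtil{g}) @>>> 0,
\end{CD}
\]
where the vertical arrows are $\bar x \mapsto \nu_I \wtil{x}$. These are injective by a support argument on $I$-cosets in $G$, so the snake sequence collapses to
\[
0 \to \Ker f_3 \to Q \xrightarrow{\times \wtil{g}} Q \to \Coker f_3 \to 0,
\]
where $Q = \Z[G]/\nu_I \Z[G]$ and $f_3 : A \to \Z[G]/(\wtil{g})$ is the induced map.

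The main, and essentially only nontrivial, step is to show that $\wtil{g}$ is a non-zero-divisor on $Q$; this will give $\Ker f_3 = 0$ and $\Coker f_3 = \Z[G]/(\nu_I, \wtil{g})$, completing the sequence. I would first verify that $Q$ is $\Z$-torsion free by identifying $e_I \Q[G] \cap \Z[G] = \nu_I \Z[G]$ (both equal the $I$-invariants of $\Z[G]$). After tensoring with $\Q$, $Q \otimes \Q \simeq (1 - e_I) \Q[G]$ splits over characters $\chi$ of $G$ with $\chi|_I \neq 1$, and on each component $\wtil{g}$ acts as $\chi(\wtil{g}) = 1 - \chi(\wtil{\varphi})^{-1} + \#I$, whose absolute value is at least $\#I \geq 1$ since $|\chi(\wtil{\varphi})^{-1}| = 1$. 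The same estimate for all $\chi$ also confirms that $\wtil{g}$ is a non-zero-divisor in $\Z[G]$, so $\Z[G]/(\wtil{g}) \in \PP$.

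With the sequence in hand, $\Fitt(\Z[G]/(\wtil{g})) = (\wtil{g})$ and $\Fitt(\Z[G]/(\nu_I, \wtil{g})) = (\nu_I, \wtil{g})$ give
\[
\SFittN{-1}_{\Z[G]}(A) = (\wtil{g})^{-1}(\nu_I, \wtil{g}) = (1, \nu_I \wtil{g}^{-1}).
\]
Here $\nu_I \wtil{g}^{-1} = \nu_I g^{-1}$ in $Q(\Z[G])$, since the identity $\nu_I \wtil{g} = \nu_I g$ holds under the embedding $\Q[G/I] \hookrightarrow \Q[G]$ via $\bar x \mapsto e_I \wtil{x}$. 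The second formula follows from $h \nu_I = \nu_I \wtil{g}$ (a direct unravelling of definitions), which gives
\[
h \cdot (1, \nu_I g^{-1}) = (h, \nu_I) = \parenth{1 - \tfrac{\nu_I}{\# I} \varphi^{-1}, \nu_I}
\]
after subtracting $\nu_I$ from the first generator.
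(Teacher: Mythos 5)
Your proof is correct and follows essentially the same route as the paper: both obtain the short exact sequence $0 \to A \to \Z[G]/(\wtil{g}) \to \Z[G]/(\wtil{g}, \nu_I) \to 0$ by a snake-lemma argument on the same commutative square (you merely transpose the roles of the rows and columns) and then read off $\SFittN{-1}_{\Z[G]}(A) = (\wtil{g})^{-1}(\wtil{g},\nu_I)$ from the definition. The only difference is that you spell out the verification that $\wtil{g}$ acts injectively on $\Z[G]/(\nu_I)$ via the character estimate $|\chi(\wtil{g})| \geq \# I$, a fact the paper simply asserts.
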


\begin{proof}
We have an exact sequence
\[
0 \to \Z[G/I] \overset{\nu_I}{\to} \Z[G] \to \Z[G]/(\nu_I) \to 0.
\]
Since multiplication by $\wtil{g}$ is injective on each of these modules, applying the snake lemma, we obtain an exact sequence
\[
0 \to A \to \Z[G]/(\wtil{g}) \to \Z[G]/(\wtil{g}, \nu_I) \to 0.
\]
By Definition \ref{defn:SFittN}, we then have
\[
\SFittN{-1}_{\Z[G]}(A) 
= (\wtil{g})^{-1} (\wtil{g}, \nu_I)
= \parenth{1, \nu_I g^{-1}}.
\]
This proves the former formula of the proposition.

Since we have $\nu_I g = \nu_I h$, the former formula implies 
$h \SFittN{-1}_{\Z[G]}(A) = \parenth{\nu_{I}, h}$.
Then the latter formula follows from $h \equiv 1 - \frac{\nu_{I}}{\# I} \varphi^{-1} (\bmod (\nu_I))$.
\end{proof}

Before proving Theorem \ref{Main theorem 2}, we show a corollary.

\begin{cor}\label{cor:Fitt1_vs_Fitt-1}
We have an inclusion
\[
\SFitt{1}_{\Z[G]}(A) \subset \SFittN{-1}_{\Z[G]}(A).
\]
Moreover, if $I$ is a cyclic group, the inclusion is an equality.
\end{cor}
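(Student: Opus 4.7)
The plan is to clear denominators by multiplying both fractional ideals by the non-zero-divisor $h$, reducing the corollary to a comparison of ordinary ideals of $\Z[G]$ that can be read off directly from the two main statements already proved. Concretely, Theorem \ref{Main theorem 2} gives
\[
h\,\SFitt{1}_{\Z[G]}(A) = \bigl(\nu_I,\; \bigl(1 - \tfrac{\nu_I}{\# I}\varphi^{-1}\bigr)\,\JJ\bigr),
\]
while Proposition \ref{prop:Fitt-1} gives
\[
h\,\SFittN{-1}_{\Z[G]}(A) = \bigl(\nu_I,\; 1 - \tfrac{\nu_I}{\# I}\varphi^{-1}\bigr).
\]
Since $h$ is a non-zero-divisor in $\Q[G]$, multiplication by $h$ is injective on the monoid of fractional ideals, so an inclusion or equality between the two displayed ideals is equivalent to the corresponding statement between $\SFitt{1}_{\Z[G]}(A)$ and $\SFittN{-1}_{\Z[G]}(A)$.

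The inclusion then follows immediately from the trivial observation that $\JJ \subset \Z[G] = (1)$. Indeed, this gives $\bigl(1 - \tfrac{\nu_I}{\# I}\varphi^{-1}\bigr)\JJ \subset \bigl(1 - \tfrac{\nu_I}{\# I}\varphi^{-1}\bigr)\Z[G]$, so the right-hand side of the first display is contained in the right-hand side of the second.

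For the equality when $I$ is cyclic, I would specialize the decomposition \eqref{eq:I_decomp} to $s = 1$ and $I_1 = I$. The first case of the example following Definition \ref{defn:J} records that $\JJ = (1)$ in this situation, so the two ideals above literally coincide and the corollary follows. No serious obstacle is expected here: all the real work has already been absorbed into the proof of Theorem \ref{Main theorem 2}, and the corollary amounts to a side-by-side inspection of its statement with that of Proposition \ref{prop:Fitt-1}.
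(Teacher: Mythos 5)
Your proposal is correct and follows essentially the same route as the paper: both reduce the statement to the explicit formulas of Theorem \ref{Main theorem 2} and Proposition \ref{prop:Fitt-1} after multiplying by the non-zero-divisor $h$, using $\JJ \subset \Z[G]$ for the inclusion and $\JJ = (1)$ (via the decomposition with $s=1$) for the equality when $I$ is cyclic.
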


\begin{proof}
By Definition \ref{defn:J}, the ideal $\JJ$ is contained in $\Z[G]$ and we have $\JJ = \Z[G]$ if $I$ is cyclic.
Hence this corollary immediately follows from Theorem \ref{Main theorem 2} and Proposition \ref{prop:Fitt-1}.
\end{proof}

%%%%%%%%%%%%%%%%%%%%%
\subsection{Computation of $\SFitt{1}_{\Z[G]}(A)$}\label{subsec:Fitt1}
%%%%%%%%%%%%%%%%%%%%%

This subsection is devoted to the proof of Theorem \ref{Main theorem 2}.

We fix the decomposition \eqref{eq:I_decomp} of $I$.
For each $1 \leq l \leq s$, we choose a generator $\sigma_l$ of $I_l$ and put $\tau_l = \sigma_l - 1 \in \Z[G]$.
Note that we then have $\nu_l = 1 + \sigma_l + \sigma_l^2 + \dots + \sigma_l^{\# I_l - 1}$ and $\tau_l \nu_l = 0$.
As in \S \ref{subsec:Fitt-1}, we put $\wtil{g} = 1 - \wtil{\varphi}^{-1} + \# I$ after choosing $\wtil{\varphi}$.

We recall $\II_D = \Ker(\Z[G] \to \Z[G/D])$ and also put $\II_I = \Ker(\Z[G] \to \Z[G/I])$.
Then we have $\II_I = (\tau_1, \dots, \tau_s)$ and $\II_D = (\II_I, 1 - \wtil{\varphi}^{-1})$.

We begin with a proposition.

\begin{prop}\label{prop:01}
We have
\[
\SFitt{1}_{\Z[G]}(A) = \sum_{i = 0}^{s} \wtil{g}^{i-1} \Fitt_{i, \Z[G]}(\II_I).
\]
\end{prop}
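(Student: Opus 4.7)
The plan is to exhibit a one-step $\PP$-resolution $0 \to Y \to P \to A \to 0$ and read off $\SFitt{1}_{\Z[G]}(A)$ directly from Definition \ref{defn:SFitt}. The natural candidate is $P = \Z[G]/(\wtil{g})$: since $\chi(\wtil{g}) = 1 + \#I - \chi(\wtil{\varphi})^{-1}$ is nonzero for every complex character $\chi$ of $G$, the element $\wtil{g}$ is a non-zero-divisor in $\Z[G]$, so $\pd_{\Z[G]}(P) \leq 1$ and $P$ is of finite order. Writing $A = \Z[G]/(\II_I, \wtil{g})$, the obvious surjection $P \twoheadrightarrow A$ has kernel
\[
Y = (\II_I + (\wtil{g}))/(\wtil{g}) \cong \II_I / (\II_I \cap (\wtil{g})),
\]
and $\Fitt_{\Z[G]}(P) = (\wtil{g})$.

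Next I would show the key identity $\II_I \cap (\wtil{g}) = \wtil{g} \cdot \II_I$, so that $Y \cong \II_I / \wtil{g}\II_I$ becomes a quotient amenable to Lemma \ref{lem:hFitt}. The nontrivial containment is the following: if $\wtil{g} y \in \II_I$, then reducing modulo $\II_I$ gives $g \bar{y} = 0$ in $\Z[G/I]$, where $g = 1 - \varphi^{-1} + \#I$ is itself a non-zero-divisor by the same character-theoretic argument applied to $G/I$. Hence $\bar{y} = 0$, i.e. $y \in \II_I$, as required.

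Finally, since $\II_I$ is generated by the $s$ elements $\tau_1, \dots, \tau_s$, Lemma \ref{lem:hFitt} applied with the principal ideal $(\wtil{g})$ yields
\[
\Fitt_{\Z[G]}(Y) = \Fitt_{0,\Z[G]}(\II_I/\wtil{g}\II_I) = \sum_{i=0}^{s} \wtil{g}^i \, \Fitt_{i,\Z[G]}(\II_I),
\]
and combining this with Definition \ref{defn:SFitt} gives
\[
\SFitt{1}_{\Z[G]}(A) = \Fitt_{\Z[G]}(P)^{-1} \Fitt_{\Z[G]}(Y) = \sum_{i=0}^{s} \wtil{g}^{i-1} \Fitt_{i,\Z[G]}(\II_I),
\]
which is the claim. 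The only mildly delicate step is the identification $\II_I \cap (\wtil{g}) = \wtil{g}\II_I$; but this is essentially forced by $g$ being a non-zero-divisor in $\Z[G/I]$, and everything else is a formal application of the definitions and of Lemma \ref{lem:hFitt}.
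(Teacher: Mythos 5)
Your proof is correct and follows essentially the same route as the paper: both arguments produce the resolution $0 \to \II_I/\wtil{g}\II_I \to \Z[G]/(\wtil{g}) \to A \to 0$ and then apply Lemma \ref{lem:hFitt}, the only difference being that the paper obtains the exactness by applying the snake lemma to multiplication by $\wtil{g}$ on the tautological sequence $0 \to \II_I \to \Z[G] \to \Z[G/I] \to 0$, whereas you verify the equivalent identity $\II_I \cap (\wtil{g}) = \wtil{g}\II_I$ directly.
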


\begin{proof}
We have the tautological exact sequence
\[
0 \to \II_I \to \Z[G] \to \Z[G/I] \to 0.
\]
Since multiplication by $\wtil{g}$ is injective on each of these modules, by applying snake lemma, we obtain an exact sequence
\[
0 \to \II_I/\wtil{g} \II_I \to \Z[G]/(\wtil{g}) \to A \to 0.
\]
Then Definition \ref{defn:SFitt} implies
\begin{align}
\SFitt{1}_{\Z[G]}(A)
& = \wtil{g}^{-1} \Fitt_{\Z[G]}(\II_I/\wtil{g} \II_I).
\end{align}
Since $\II_I$ is generated by the $s$ elements $\tau_1, \dots, \tau_s$, we have
\[
\Fitt_{\Z[G]}(\II_I/\wtil{g} \II_I) 
= \sum_{i = 0}^{s} \wtil{g}^i \Fitt_{i, \Z[G]}(\II_I)
\]
by Lemma \ref{lem:hFitt}.
Thus we obtain the proposition.
\end{proof}

Our next task is to determine $\Fitt_{i, \Z[G]}(\II_I)$ for $0 \leq i \leq s$.
The result will be Proposition \ref{prop:FittB} below.
For that purpose, we construct a concrete free resolution of $\Z$ over $\Z[I]$, using an idea of Greither-Kurihara \cite[\S 1.2]{GK15} (one may also refer to \cite[\S 4.3]{Kata_05}).

For each $1 \leq l \leq s$, we have a homological complex
\[
C^l: \cdots \overset{\tau_l}{\to} \Z[I_l] \overset{\nu_l}{\to} \Z[I_l] \overset{\tau_l}{\to} \Z[I_l] \to 0
\]
over $\Z[I_l]$, concentrated at degrees $\geq 0$.
Let $C^l_n$ be the degree $n$ component of $C^l$, so $C^l_n = \Z[I_l]$ if $n \geq 0$ and $C^l_n = 0$ otherwise.
Then the homology groups are $H_n(C^l) = 0$ for $n \neq 0$ and $H_0(C^l) \simeq \Z$.

We define a complex $C$ over $\Z[I]$ by
\[
C = \bigotimes_{l=1}^s C^l,
\]
which is the tensor product of complexes over $\Z$ 
(we do not specify the sign convention as it does not matter to us; we define it appropriately so that the descriptions of $d_1$ and $d_2$ below are valid).
Explicitly, the degree $n$ component $C_n$ of $C$ is defined as
\[
C_n = \bigoplus_{n_1 + \dots + n_s = n} C^1_{n_1} \otimes \dots \otimes C^s_{n_s}.
\]
Clearly the tensor product is zero unless $n_1, \dots, n_s \geq 0$, and in that case 
\[
C^1_{n_1} \otimes \dots \otimes C^s_{n_s} = \Z[I_1] \otimes \dots \otimes \Z[I_s] \simeq \Z[I].
\]
It is convenient to write $x_1^{n_1} \cdots x_s^{n_s}$ for the basis of $C^1_{n_1} \otimes \dots \otimes C^s_{n_s}$ for each $n_1, \dots, n_s \geq 0$, following \cite{GK15}.
Then, for each $n \geq 0$, the module $C_n$ is a free module on the set of monomials of $x_1, \dots, x_s$ of degree $n$.

A basic property of tensor products of complexes implies that $H_n(C) = 0$ for $n \neq 0$ and $H_0(C) \simeq \Z$.
Therefore, $C$ is a free resolution of $\Z$ over $\Z[I]$.

It will be necessary to investigate some components of $C$ of low degrees.
Note that $C_0$ is free of rank one with a basis $1 (= x_1^0 \cdots x_s^0)$, $C_1$ is a free module on the set
\[
S_1 = \{x_1, \dots, x_s\},
\]
and $C_2$ is a free module on the set $S_2 \cup S_2'$ where
\[
S_2 = \{x_1^2, \dots, x_s^2\},
\qquad S_2' = \{x_l x_{l'} \mid 1 \leq l < l' \leq s\}.
\]
Moreover, the differential $d_n: C_n \to C_{n-1}$ for $n=1, 2$ are described as follows.
We have 
\[
d_1(x_l) = \tau_l \cdot 1
\]
for each $1 \leq l \leq s$,
\[
d_2(x_l^2) = \nu_l x_l
\]
for each $1 \leq l \leq s$, and 
\[
d_2(x_l x_{l'}) = \tau_l x_{l'} - \tau_{l'} x_l
\]
for each $1 \leq l < l' \leq s$.
%Here, the signs depend on a convention on the tensor products of complexes; one may have $d_2(x_l x_{l'}) = - \tau_l x_{l'} + \tau_{l'} x_l$.
%However, the sign does not matter for our purpose, so we ignore that.

Let $M$ denote the presentation matrix of $d_2$.
For clarity, we define $M$ formally as follows.

\begin{defn}\label{defn:MandN}
We define a matrix 
\[
M = M_s(\nu_1,\dots, \nu_s, \tau_1, \dots, \tau_s)
\]
with the columns (resp. the rows) indexed by $S_1$ (resp. $S_2 \cup S_2'$), by
\[
\begin{cases}
\text{the $(x_l^2, x_l)$ component is $\nu_l$} & \text{for $1 \leq l \leq s$,}\\
\text{the $(x_l x_{l'}, x_l)$ component is $-\tau_{l'}$} & \text{for $1 \leq l < l' \leq s$,}\\
\text{the $(x_l x_{l'}, x_{l'})$ component is $\tau_l$} & \text{for $1 \leq l < l' \leq s$,}\\
\text{and the other components are zero.}
\end{cases}
\]
Here, we do not specify the orders of the sets $S_1$ and $S_2 \cup S_2'$.
The ambiguity does not matter for our purpose.

For later use, we also define a matrix
\[
N_s(\tau_1, \dots, \tau_s)
\]
as the submatrix of $M$ with the rows in $S_2$ removed.
More precisely, we define the matrix $N_s(\tau_1, \dots, \tau_s)$ with the columns (resp. rows) indexed by $S_1$ (resp. $S_2'$), by
\[
\begin{cases}
\text{the $(x_l x_{l'}, x_l)$ component is $-\tau_{l'}$} & \text{for $1 \leq l < l' \leq s$,}\\
\text{the $(x_l x_{l'}, x_{l'})$ component is $\tau_l$} & \text{for $1 \leq l < l' \leq s$,}\\
\text{and the other components are zero.}
\end{cases}
\]
Therefore, by choosing appropriate orders of rows and columns, we have
\[
M_s(\nu_1,\dots, \nu_s, \tau_1, \dots, \tau_s) = 
\begin{pmatrix}
\begin{matrix}
\nu_1 & & \\
& \ddots & \\
& & \nu_s \\
\end{matrix}\\
 N_s(\tau_1, \dots, \tau_s) 
\end{pmatrix}.
\]
\end{defn}

\begin{eg}
When $s = 3$, we have
\[
M = 
\begin{pmatrix}
\nu_1 & & \\
& \nu_2 & \\
& & \nu_3 \\
& - \tau_3 & \tau_2\\
- \tau_3 & & \tau_1\\
- \tau_2 & \tau_1 &
\end{pmatrix}
\]
Here, we use the order $x_2x_3, x_1x_3, x_1x_2$ for the set $S_2'$.
\end{eg}

\begin{prop}\label{prop:04}
The matrix $M_s(\nu_1,\dots, \nu_s, \tau_1, \dots, \tau_s)$, over $\Z[G]$, is a presentation matrix of the module $\II_I$.
\end{prop}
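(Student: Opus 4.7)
The plan is to base-change the free resolution $C$ from $\Z[I]$ up to $\Z[G]$, thereby obtaining a free resolution of $\Z[G/I]$ over $\Z[G]$, and then read off a presentation of the augmentation ideal $\II_I$ directly from the first two differentials $d_1$ and $d_2$.

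More concretely, since $G$ is a finite abelian group containing $I$, any set of coset representatives of $G/I$ furnishes a $\Z[I]$-basis of $\Z[G]$. Thus $\Z[G]$ is a free $\Z[I]$-module, in particular flat, so tensoring the complex $C$ with $\Z[G]$ over $\Z[I]$ preserves all exactness. Since the excerpt has already established that $H_0(C) \simeq \Z$ and $H_n(C) = 0$ for $n \neq 0$, we obtain a free resolution of $\Z \otimes_{\Z[I]} \Z[G] = \Z[G/I]$ over $\Z[G]$, augmented by the natural surjection $\Z[G] \twoheadrightarrow \Z[G/I]$.

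By definition $\II_I$ is the kernel of this augmentation, which by exactness coincides with the image of $d_1$. Therefore we obtain an exact sequence
\[
C_2 \otimes_{\Z[I]} \Z[G] \xrightarrow{d_2} C_1 \otimes_{\Z[I]} \Z[G] \twoheadrightarrow \II_I \to 0,
\]
so the matrix of $d_2$ with respect to the monomial bases of $C_1$ and $C_2$ fixed earlier is a presentation matrix for $\II_I$. Combined with the explicit formulas $d_2(x_l^2) = \nu_l x_l$ and $d_2(x_l x_{l'}) = \tau_l x_{l'} - \tau_{l'} x_l$ displayed just before Definition \ref{defn:MandN}, this matrix is literally $M_s(\nu_1,\dots,\nu_s,\tau_1,\dots,\tau_s)$, which is what we wanted.

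There is essentially no obstacle here: the only facts that need to be verified are the flatness of $\Z[G]$ over $\Z[I]$, which is immediate from the coset decomposition, and the fact that the matrix of $d_2$ agrees with $M$, which is true by the very definition of $M$ in Definition \ref{defn:MandN}.
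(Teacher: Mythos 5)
Your argument is correct and is essentially the paper's own proof: both rest on the observation that $M$ is the matrix of $d_2$ in the free resolution $C$ of $\Z$ over $\Z[I]$, together with flatness (indeed freeness) of $\Z[G]$ over $\Z[I]$ to transport the presentation to $\II_I = \Ker(\Z[G]\to\Z[G/I])$. The only cosmetic difference is that the paper first reads off the presentation of the augmentation ideal over $\Z[I]$ and then base-changes, whereas you base-change the whole resolution first; the content is identical.
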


\begin{proof}
By the construction, $M$ is a presentation matrix of $\Ker(\Z[I] \to \Z)$ over $\Z[I]$.
Since $\Z[G]$ is flat over $\Z[I]$, we obtain the proposition by base change.
\end{proof}

\begin{prop}\label{prop:02}
For each $0 \leq i \leq s$, we have
\[
\Fitt_{i, \Z[G]}(M) 
= \sum_{j=0}^{s - i} \sum_{\substack{a \subset \{1, 2, \dots, s\} \\ \# a = j}} \nu_{a_1} \cdots \nu_{a_j} \Fitt_{i, \Z[G]} (N_{s - j}(\tau_{a_{j + 1}}, \dots, \tau_{a_s})).
\]
Here, for each $j$, in the second summation $a$ runs over subsets of $\{1, 2, \dots, s\}$ of $j$ elements, and for each $a$ we define $a_1, \dots, a_s$ by requiring 
\[
a = \{a_1, \dots, a_j\},
\qquad \{a_1, \dots, a_s\} = \{1, 2, \dots, s\}, 
\qquad a_1 < \dots < a_j, 
\qquad a_{j + 1} < \dots < a_s.
\]
The matrix $N_{s - j}(\tau_{a_{j + 1}}, \dots, \tau_{a_s})$ is defined as in Definition \ref{defn:MandN} for $s - j$ and $\tau_{a_{j+1}}, \dots, \tau_{a_s}$ instead of $s$ and $\tau_1, \dots, \tau_s$.
\end{prop}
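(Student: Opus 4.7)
The plan is to compute $\Fitt_{i,\Z[G]}(M)$ directly from the definition, by analyzing its generating $(s-i)\times(s-i)$ minors according to how many of the chosen rows come from the $\nu$-block $S_2$ versus the $\tau$-block $S_2'$. First I would fix a choice of $s-i$ rows and $s-i$ columns of $M$, and let $a = \{a_1 < \cdots < a_j\} \subset \{1,\dots,s\}$ be the indices for which $\{x_{a_1}^2,\dots,x_{a_j}^2\}$ is the intersection of the chosen rows with $S_2$; denote the complement by $\{a_{j+1} < \cdots < a_s\}$. Since the row $x_{a_l}^2$ has its unique nonzero entry $\nu_{a_l}$ in the single column $x_{a_l}$, the minor vanishes unless the chosen columns contain all of $x_{a_1},\dots,x_{a_j}$. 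Under this constraint, Laplace expansion along the $\nu$-rows factors the minor as $\pm\, \nu_{a_1} \cdots \nu_{a_j} \cdot \det(M')$, where $M'$ is the submatrix obtained by keeping only the chosen $\tau$-rows and those chosen columns lying in $\{x_{a_{j+1}},\dots,x_{a_s}\}$.

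Next I would classify the $\tau$-rows of $M'$ by how $\{l,l'\}$ intersects $a$. A row $x_l x_{l'}$, restricted to columns indexed by $\{a_{j+1},\dots,a_s\}$, is either a row of $N_{s-j}(\tau_{a_{j+1}},\dots,\tau_{a_s})$ (the "pure" case, when both $l,l' \notin a$); a "mixed" row with a single nonzero entry that is a $\tau$-variable whose subscript lies in $a$ (when exactly one of $l,l'$ belongs to $a$); or identically zero if both $l,l' \in a$, which forces $\det(M') = 0$. The decisive point is the relation $\tau_b \nu_b = 0$: if $M'$ contains any mixed row, Laplace expansion along it produces a factor $\pm \tau_b$ with $b \in a$, which is annihilated by the $\nu_b$ already present in $\nu_{a_1}\cdots\nu_{a_j}$. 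Hence only purely "good" choices contribute, and for these $\det(M')$ is exactly a $(s-j-i)\times(s-j-i)$ minor of $N_{s-j}(\tau_{a_{j+1}},\dots,\tau_{a_s})$; since this matrix has $s-j$ columns, such minors generate $\Fitt_{i,\Z[G]}(N_{s-j}(\tau_{a_{j+1}},\dots,\tau_{a_s}))$. This gives the inclusion $\subset$.

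The reverse inclusion is immediate: any product $\nu_{a_1}\cdots\nu_{a_j}$ times a $(s-j-i)\times(s-j-i)$ minor of $N_{s-j}(\tau_{a_{j+1}},\dots,\tau_{a_s})$ is realized as an $(s-i)\times(s-i)$ minor of $M$ by reversing the recipe above. The boundary cases are consistent with the standard conventions: at $j=0$ one recovers $\Fitt_i(N_s(\tau_1,\dots,\tau_s))$, while at $j=s-i$ the matrix $N_i$ has $i$ columns so $\Fitt_i(N_i) = (1)$ via the empty-minor convention, producing exactly the contribution $(\nu_{a_1}\cdots\nu_{a_{s-i}})$ summed over subsets $a$ of size $s-i$. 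The argument is not conceptually deep; the main obstacle is bookkeeping — managing signs in the Laplace expansion, verifying cleanly that every mixed-row contribution is killed by a suitable $\nu_b$, and treating the boundary cases so that the sum lands in the claimed range $0 \leq j \leq s-i$.
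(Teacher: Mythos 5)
Your proof is correct and follows essentially the same route as the paper: classify the $(s-i)\times(s-i)$ minors of $M$ by the set $a$ of $\nu$-rows used, factor out $\nu_{a_1}\cdots\nu_{a_j}$, and use $\tau_l\nu_l=0$ to discard the entries indexed by $a$, leaving minors of $N_{s-j}(\tau_{a_{j+1}},\dots,\tau_{a_s})$. The only (cosmetic) difference is that you kill the ``mixed'' rows minor-by-minor via Laplace expansion, whereas the paper first writes the answer in terms of the column-deleted submatrix $N_a$ and then simplifies it to $N_{s-j}$ with zero rows added.
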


\begin{proof}
By the definition of higher Fitting ideals, $\Fitt_{i, \Z[G]}(M)$ is generated by $\det(H)$ for square submatrices $H$ of $M$ of size $s - i$.
Such a matrix $H$ is in one-to-one correspondence with choices of a subset $A_H^{\column} \subset S_1 = \{x_1, \dots, x_s \}$ with $\# A_H^{\column} = s - i$ and a subset $A_H^{\row} \subset S_2 \cup S_2' = \{x_1^2, \dots, x_s^2, x_1x_2, \dots, x_{s-1}x_s\}$ with $\# A_H^{\row} = s - i$.
We only have to deal with $H$ satisfying $\det(H) \neq 0$.

For each $H$, we define $j$ and $a$ by
\[
j = \#(A_H^{\row} \cap S_2)
\]
(so clearly $0 \leq j \leq s - i$) and
\[
A_H^{\row} \cap S_2 = \{x_{a_1}^2, \dots, x_{a_j}^2\}.
\]
Recall that the $x_l^2$ row in the matrix $M$ contains a unique non-zero component $\nu_l$ in the $x_l$ column.
Therefore, the assumption $\det(H) \neq 0$ forces $x_{a_1}, \dots, x_{a_j} \in A_H^{\column}$ and 
\[
\det(H) = \pm \nu_{a_1} \cdots \nu_{a_j} \det(H'),
\]
where $H'$ is the square submatrix of $H$ of size $(s - i) - j$, with rows in $A_{H'}^{\row} = A_H^{\row} \setminus \{x_{a_1}^2, \dots, x_{a_j}^2\} = A_H^{\row} \cap S_2'$ and columns in $A_{H'}^{\column} = A_H^{\column} \setminus \{x_{a_1}, \dots, x_{a_j}\}$.

Let $N_a$ denote the submatrix of $N_s(\tau_1, \dots, \tau_s)$ obtained by removing the $x_{a_1}, \dots, x_{a_j}$ columns.
Then it is clear that $\det(H')$'s (for fixed $j$ and $a$) as above generate $\Fitt_{i, \Z[G]} (N_a)$.
The argument so far implies
\[
\Fitt_{i, \Z[G]}(M) 
= \sum_{j=0}^{s - i} \sum_{\substack{a \subset \{1, 2, \dots, s\} \\ \# a = j}} \nu_{a_1} \cdots \nu_{a_j} \Fitt_{i, \Z[G]} (N_a).
\]
By the formula $\tau_l \nu_l = 0$, we may remove the components $\pm \tau_{a_1}, \dots, \pm \tau_{a_j}$ from the matrix $N_a$ in the right hand side.
It is easy to check that the resulting matrix is nothing but $N_{s - j}(\tau_{a_{j + 1}}, \dots, \tau_{a_s})$ (with several zero rows added).
This completes the proof.
\end{proof}

\begin{prop}\label{prop:03}
For $s \geq 0$ and $i \geq 0$, we have 
\[
\Fitt_{i, \Z[G]}(N_s(\tau_1, \dots, \tau_s)) = 
\begin{cases}
(1) & (i \geq s)\\
0 & (s \geq 1, i = 0)\\
(\tau_1, \dots, \tau_s)^{s - i} & (1 \leq i < s)\\
\end{cases}
\]
\end{prop}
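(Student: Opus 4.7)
The statement splits naturally into three cases: $i \geq s$, $i = 0$ with $s \geq 1$, and $1 \leq i < s$. The first is immediate from the convention that $\Fitt_i = (1)$ once $i$ exceeds the number of columns of the matrix. The inclusion $\Fitt_i(N_s) \subseteq (\tau_1, \ldots, \tau_s)^{s-i}$ in the third case is clear, since every entry of $N_s$ lies in $(\tau_1, \ldots, \tau_s)$. What remains is to establish $\Fitt_0(N_s) = 0$ for $s \geq 1$ and the reverse inclusion $(\tau_1, \ldots, \tau_s)^{s-i} \subseteq \Fitt_i(N_s)$ for $1 \leq i < s$.

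For the vanishing of $\Fitt_0(N_s)$, the starting point is the linear relation $N_s \cdot (\tau_1, \ldots, \tau_s)^T = 0$, verified row by row since the row indexed by $(l, l')$ contributes $(-\tau_{l'})\tau_l + \tau_l \tau_{l'} = 0$. Take any $s \times s$ submatrix $H$ of $N_s$: either one of its chosen columns corresponds to a vertex not incident to any chosen edge (making that column of $H$ zero), or the column set is all of $\{1, \ldots, s\}$ and the relation restricts to $H \cdot (\tau_1, \ldots, \tau_s)^T = 0$, forcing $\det(H) \cdot \tau_l = 0$ for every $l$. Hence $\det(H)$ lies in $\bigcap_l \nu_l \Z[G] = \nu_I \Z[G]$. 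On the other hand $\det(H)$, being a polynomial in the $\tau_l$ with integer coefficients, lies in the augmentation ideal $\Delta I \subset \Z[I]$; one checks directly that $\Z[I] \cap \nu_I \Z[G] = \Z \cdot \nu_I$, whose intersection with $\Delta I$ is zero (since $\nu_I$ has augmentation $\# I \neq 0$). So $\det(H) = 0$.

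For $1 \leq i < s$, I plan to realize each monomial $\tau_{j_1} \cdots \tau_{j_{s-i}}$ (with possible repetitions among the $j_t$) as a signed $(s-i) \times (s-i)$ minor. The rows of $N_s$ correspond to edges and the columns to vertices of the complete graph on $\{1, \ldots, s\}$. A choice of $s-i$ rows whose edges form a tree $T$ on a vertex set $V_T \subset \{1, \ldots, s\}$ of size $s-i+1$, together with columns $A = V_T \setminus \{v^*\}$ for some $v^* \in V_T$, admits a unique perfect matching in the edge--vertex incidence bipartite graph: a leaf-removal induction, treating separately the cases when a chosen leaf equals $v^*$ or not, forces each edge to match its child endpoint after rooting $T$ at $v^*$. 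The corresponding minor therefore equals $\pm \prod_{u \in V_T \setminus \{v^*\}} \tau_{\mathrm{par}(u)}$. To realize a prescribed monomial $\prod_{p=1}^{t} \tau_{c_p}^{m_{c_p}}$ with support $\{c_1, \ldots, c_t\}$ and $\sum_p m_{c_p} = s-i$, I construct $T$ as a caterpillar: take the path $c_1 - c_2 - \cdots - c_t$ rooted at $c_1$, and attach to each $c_p$ additional leaves drawn from $\{1, \ldots, s\} \setminus \{c_1, \ldots, c_t\}$ so that its total number of children is $m_{c_p}$. There is room for these extra leaves precisely because $i \geq 1$, and the resulting minor equals the target monomial up to sign, which suffices for ideal membership.

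The main technical points to watch are the uniqueness of the bipartite matching on a rooted tree (the leaf-removal induction must be done carefully depending on whether the chosen leaf coincides with the root) and the ring-theoretic identification $\Z[I] \cap \nu_I \Z[G] = \Z \cdot \nu_I$, which is what allows the $\Fitt_0$ argument to bypass an explicit cycle-cancellation analysis of determinants.
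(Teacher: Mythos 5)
Your proof is correct, but both of its nontrivial steps take genuinely different routes from the paper. For the vanishing of $\Fitt_{0,\Z[G]}(N_s)$ with $s \geq 1$, the paper lifts to the polynomial ring $\Z[T_1,\dots,T_s]$ and uses the Koszul resolution to recognize $N_s(T_1,\dots,T_s)$ as a presentation matrix of the augmentation ideal $(T_1,\dots,T_s)$, which has generic rank one, so its initial Fitting ideal vanishes and one specializes via $T_l \mapsto \tau_l$; you instead stay in $\Z[G]$ and exploit the syzygy $N_s \cdot (\tau_1,\dots,\tau_s)^{T} = 0$, concluding via the adjugate that $\det(H)$ annihilates every $\tau_l$, hence lies in $\nu_I\Z[G]$, while also lying in the augmentation ideal $\Delta I$ of $\Z[I]$, and $\Delta I \cap \nu_I\Z[G] = 0$. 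Both arguments are sound (your identifications $\bigcap_l \nu_l \Z[G] = \nu_I\Z[G]$ and $\Z[I] \cap \nu_I \Z[G] = \Z\nu_I$ follow from the freeness of $\Z[G]$ over $\Z[I]$ and the standard resolution of $\Z$ over a cyclic group); yours avoids the detour through a second ring at the cost of these small ring-theoretic verifications. For the inclusion $(\tau_1,\dots,\tau_s)^{s-i} \subset \Fitt_{i,\Z[G]}(N_s)$ with $1 \leq i < s$, the paper argues by induction on $s$ using a block decomposition of $N_s$ after isolating one column, whereas you realize each generating monomial directly as a signed $(s-i)\times(s-i)$ minor indexed by the edges of a rooted caterpillar tree and its non-root vertices, using the unique-matching property of rooted trees; your count showing that the auxiliary leaves fit exactly when $i \geq 1$ is the right one, and the construction is a correct, more explicit alternative to the paper's induction. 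The only point worth tightening in a write-up is the leaf-removal induction: one should always peel a leaf different from the root $v^*$ (such a leaf exists in any tree with at least two vertices), which streamlines your case distinction.
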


\begin{proof}
Since $N_s(\tau_1, \dots, \tau_s)$ has $s$ columns, the case for $i \geq s$ is clear.

We show the vanishing when $s \geq 1$ and $i = 0$.
Let $R = \Z[T_1, \dots, T_s]$ be the polynomial ring over $\Z$.
Then we have a ring homomorphism $f: R \to \Z[G]$ defined by sending $T_l$ to $\tau_l$.
We define a matrix $N_s(T_1, \dots, T_s)$ over $R$ in the same way as in Definition \ref{defn:MandN}, with $\tau_{\bullet}$ replaced by $T_{\bullet}$.
Then, by the base change via $f$, we have
\[
\Fitt_{\Z[G]}(N_s(\tau_1, \dots, \tau_s)) = f(\Fitt_{R}(N_s(T_1, \dots, T_s))) \Z[G].
\]
Hence the left hand side would vanish if we show that $\Fitt_{R}(N_s(T_1, \dots, T_s)) = 0$.

For each $1 \leq l \leq s$, we consider the complex
\[
\wtil{C}^l: 0 \to \Z[T_l] \overset{T_l}{\to} \Z[T_l] \to 0,
\]
over $\Z[T_l]$, which satisfies $H_n(\wtil{C}^l) = 0$ for $n \neq 0$ and  $H_0(\wtil{C}^l) \simeq \Z$.
Similarly as previous, by taking the tensor product of the complexes $\wtil{C}^l$ over $\Z$, we obtain an exact sequence
\[
\cdots \to \wtil{C}_2
 \overset{N_s(T_1, \dots, T_s)}{\to} \wtil{C}_1
\overset{\tiny \begin{pmatrix}T_1 \\ \vdots \\ T_s \end{pmatrix}}{\to} \wtil{C}_0 \to \Z \to 0
\]
over $R$.
(Alternatively, this exact sequence is obtained from the Koszul complex for the regular sequence $T_1, \dots, T_s$.)
This implies that $\Fitt_R(N_s(T_1, \dots, T_s))$ is the Fitting ideal of the augmentation ideal of $R$.
Since $s \geq 1$, the augmentation ideal of $R$ is generically of rank one, so we obtain
$
\Fitt_R(N_s(T_1, \dots, T_s)) = 0,
$
as desired.

Finally we show the case where $1 \leq i < s$.
Since the components of the matrix $N_s(\tau_1, \dots, \tau_s)$ are either $0$ or one of $\tau_1, \dots, \tau_s$, the inclusion $\subset$ is clear.
In order to show the other inclusion, we use the induction on $s$.

For a while we fix an arbitrary $1 \leq l \leq s$.
Then, by permuting the rows and columns, the matrix $N_s(\tau_1, \dots, \tau_s)$ can be transformed into
\[
\begin{pmatrix}
N_{s - 1}(\tau_1, \dots, \check{\tau_l}, \dots, \tau_s)
& \\
\begin{array}{ccccc}
-\tau_l &&&& \\
& -\tau_l &&&\\ 
&& \ddots &&\\ 
&&& -\tau_l &\\ 
&&&& -\tau_l
\end{array}
& \begin{array}{c}
\tau_1 \\ \vdots \\ \check{\tau_l} \\ \vdots \\ \tau_s 
\end{array}
\end{pmatrix}.
\]
(The symbol $\check{(-)}$ means omitting that term.)
Here, the $x_l$ column is placed in the right-most, and the $x_1x_l, \dots, x_{l-1} x_{l}, x_{l} x_{l + 1}, \dots, x_l x_s$ rows are placed in the lower.
We also reversed the signs of some rows for readability as that does not matter at all.

This expression implies
\[
\Fitt_{i, \Z[G]}(N_s(\tau_1, \dots, \tau_s))
\supset (\tau_1, \dots, \check{\tau_l}, \dots, \tau_s) \Fitt_{i, \Z[G]}(N_{s-1}(\tau_1, \dots, \check{\tau_l}, \dots, \tau_s)).
\]
By the induction hypothesis (note that $1 \leq i \leq s - 1$), we have
\[
\Fitt_{i, \Z[G]}(N_s(\tau_1, \dots, \tau_s))
\supset (\tau_1, \dots, \check{\tau_l}, \dots, \tau_s) (\tau_1, \dots, \check{\tau_l}, \dots, \tau_s)^{s-1-i}
=(\tau_1, \dots, \check{\tau_l}, \dots, \tau_s)^{s-i}.
\]

Now we vary $l$ and then obtain
\[
\Fitt_{i, \Z[G]}(N_s(\tau_1, \dots, \tau_s))
\supset \sum_{l = 1}^s (\tau_1, \dots, \check{\tau_l}, \dots, \tau_s)^{s-i}
= (\tau_1, \dots, \tau_s)^{s-i},
\]
where the last equality follows from $s - i < s$.
This completes the proof of the proposition.
\end{proof}

%\begin{defn}\label{defn:Ji}
%%It is easy to see $J_0 \subset J_1 \subset \cdots \subset J_s = (1)$.
%\end{defn}

Now we incorporate Propositions \ref{prop:04}, \ref{prop:02} and \ref{prop:03} to prove the following.

\begin{prop}\label{prop:FittB}
For $0 \leq i \leq s$, we define an ideal $J_i$ of $\Z[G]$ by
\[
J_i =
\begin{cases}
	(\nu_1 \cdots \nu_s) = (\nu_I) & (i = 0)\\
	\sum_{j = 0}^{s - i} Z_{i + j} \II_{I}^{j} 
	= Z_i + Z_{i + 1} \II_I + \cdots + Z_s \II_I^{s - i}
	& (1 \leq i \leq s).
\end{cases}
\]
Then we have
\[
\Fitt_{i, \Z[G]}(\II_I) = J_i.
\]
\end{prop}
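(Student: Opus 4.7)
The proof is essentially an assembly of the three propositions just proved, together with one observation that lets us rewrite the higher-rank Fitting contributions of $N_{s-j}$ in a form that matches the ideals $Z_{i+k}\II_I^{k}$ appearing in the definition of $J_i$.

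First I would invoke Proposition \ref{prop:04} to identify $\Fitt_{i,\Z[G]}(\II_I)=\Fitt_{i,\Z[G]}(M)$, and then expand the right-hand side by Proposition \ref{prop:02}:
\[
\Fitt_{i,\Z[G]}(\II_I)=\sum_{j=0}^{s-i}\sum_{\substack{a\subset\{1,\dots,s\}\\ \#a=j}}\nu_{a_1}\cdots\nu_{a_j}\,\Fitt_{i,\Z[G]}\!\bigl(N_{s-j}(\tau_{a_{j+1}},\dots,\tau_{a_s})\bigr).
\]
Next I would feed in Proposition \ref{prop:03} with the three cases, organizing the sum by the integer $k=s-j-i\geq 0$ (so $j=s-i-k$ and $s-j=i+k$): when $k=0$ the matrix $N_i$ contributes the unit ideal, and when $k\geq 1$ it contributes $(\tau_{a_{j+1}},\dots,\tau_{a_s})^{k}$ (with the subscripts running over the complement of $a$ in $\{1,\dots,s\}$). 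The $k=0$ layer is precisely $Z_i$ by the definition of $Z_i$, so the claim will reduce to showing that the $k$-th layer equals $Z_{i+k}\II_I^{k}$ for each $1\leq k\leq s-i$.

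The small but crucial step is the identity
\[
\nu_{a_1}\cdots\nu_{a_{s-i-k}}\cdot\II_I^{k}
=\nu_{a_1}\cdots\nu_{a_{s-i-k}}\cdot(\tau_b\mid b\notin a)^{k},
\]
which holds because $\tau_l\nu_l=0$ kills any generator of $\II_I^{k}=(\tau_1,\dots,\tau_s)^{k}$ in which some $\tau_l$ with $l\in a$ appears. This identity lets me sum over $a$ of size $s-i-k$ and obtain exactly $Z_{i+k}\II_I^{k}$ (using that $Z_{i+k}$ is generated by all products of $s-i-k$ of the $\nu_l$).

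Summing the layers over $k=0,1,\dots,s-i$ then yields $\Fitt_{i,\Z[G]}(\II_I)=\sum_{k=0}^{s-i}Z_{i+k}\II_I^{k}=J_i$ for $1\leq i\leq s$. For the boundary case $i=0$, only the term $j=s$ in Proposition \ref{prop:02} survives (Proposition \ref{prop:03} forces the others to vanish because $\Fitt_{0}(N_{s'})=0$ as soon as $s'\geq 1$), giving $\Fitt_{0,\Z[G]}(\II_I)=(\nu_1\cdots\nu_s)=(\nu_I)=J_0$. The only real obstacle is the bookkeeping in the substitution $k=s-j-i$ and keeping track of which tuples $a$ and complements $\{1,\dots,s\}\setminus a$ generate which ideals; no further hard input is needed beyond the previously established propositions and the identity $\tau_l\nu_l=0$.
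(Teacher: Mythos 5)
Your proposal is correct and follows essentially the same route as the paper's proof: combine Propositions \ref{prop:04}, \ref{prop:02}, and \ref{prop:03}, then use $\tau_l\nu_l=0$ to replace $(\tau_b\mid b\notin a)^{k}$ by $\II_I^{k}$ after multiplying by $\nu_{a_1}\cdots\nu_{a_j}$, and reindex to recognize $Z_{i+k}\II_I^{k}$; the treatment of the $i=0$ boundary case is also identical.
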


\begin{proof}
By Propositions \ref{prop:04} and \ref{prop:02}, we have
\begin{align}
\Fitt_{i, \Z[G]}(\II_I)
&= \Fitt_{i, \Z[G]}(M)\\
&= \sum_{j=0}^{s - i} \sum_{\substack{a \subset \{1, 2, \dots, s\} \\ \# a = j}} \nu_{a_1} \cdots \nu_{a_j} \Fitt_{i, \Z[G]} (N_{s - j}(\tau_{a_{j + 1}}, \dots, \tau_{a_s})).
\end{align}

When $i = 0$, Proposition \ref{prop:03} implies
\[
\Fitt_{0, \Z[G]} (N_{s - j}(\tau_{a_{j + 1}}, \dots, \tau_{a_s})) =
\begin{cases}
(1) & (j = s)\\
0 & (0 \leq j < s).\\
\end{cases}
\]
Clearly, $j = s$ forces $a = \{1, 2, \dots, s\}$, so we obtain
\[
\Fitt_{0, \Z[G]}(\II_I) = (\nu_{1} \cdots \nu_{s}) = J_0.
\]

When $1 \leq i \leq s$, since $1 \leq i \leq s - j$ by the choice of $j$, Proposition \ref{prop:03} implies 
\[
\Fitt_{i, \Z[G]} (N_{s - j}(\tau_{a_{j + 1}}, \dots, \tau_{a_s}))
= (\tau_{a_{j + 1}}, \dots, \tau_{a_s})^{s-i-j}.
\]
Then we obtain
\[
\Fitt_{i, \Z[G]}(\II_I)
= \sum_{j=0}^{s - i} \sum_{\substack{a \subset \{1, 2, \dots, s\} \\ \# a = j}} \nu_{a_1} \cdots \nu_{a_j} (\tau_{a_{j+1}}, \dots, \tau_{a_s})^{s - i - j}.
\]
Using the relation $\nu_l \tau_l = 0$, for each $0 \leq j \leq s - i$, we have
\[
\sum_{\substack{a \subset \{1, 2, \dots, s\} \\ \# a = j}} \nu_{a_1} \cdots \nu_{a_j} (\tau_{a_{j+1}}, \dots, \tau_{a_s})^{s - i - j}
= \sum_{\substack{a \subset \{1, 2, \dots, s\} \\ \# a = j}} \nu_{a_1} \cdots \nu_{a_j} \II_I^{s - i - j}
= Z_{s - j }\II_I^{s - i - j}.
\]
These formulas imply $\Fitt_{i, \Z[G]}(\II_I) = J_i$.
\end{proof}

We are ready to prove Theorem \ref{Main theorem 2}.

\begin{proof}[Proof of Theorem \ref{Main theorem 2}]
By Propositions \ref{prop:01} and \ref{prop:FittB}, we have
\[
\SFitt{1}_{\Z[G]}(A) = \sum_{i = 0}^s \wtil{g}^{i-1} J_i.
\]
Then, noting $J_0 = (\nu_I)$, we can deduce
\[
h \SFitt{1}_{\Z[G]}(A) 
= \parenth{\nu_I, \parenth{1 - \frac{\nu_I}{\# I} \varphi^{-1} } \sum_{i = 1}^s \wtil{g}^{i-1} J_i}
\]
in the same way as in the proof of Proposition \ref{prop:Fitt-1}.
Then it is enough to show
\begin{equation}\label{eq:J_vs_Ji}
\JJ = \sum_{i = 1}^s \wtil{g}^{i-1} J_i.
\end{equation}

We claim that 
\begin{equation}\label{eq:J_1}
(\II_I, \# I) J_{i + 1} \subset J_i
\end{equation}
holds for $1 \leq i \leq s - 1$.
We first see
\[
\II_I J_{i + 1} 
= \II_I \sum_{j = 0}^{s - i - 1} Z_{i + 1 + j} \II_{I}^{j}
= \sum_{j = 1}^{s - i} Z_{i + j} \II_{I}^{j}
\subset J_i.
\]
We also have $\nu_I J_{i + 1} \subset (\nu_I) \subset J_0 \subset J_i$.
Since $(\II_I, \# I) = (\II_I, \nu_I)$ as an ideal, these show the claim \eqref{eq:J_1}.

Using \eqref{eq:J_1}, we next show
\begin{equation}\label{eq:J_2}
\sum_{i = 1}^s \wtil{g}^{i-1} J_i 
= \sum_{i = 1}^s (1 - \wtil{\varphi}^{-1})^{i-1} J_i.
\end{equation}
More generally we actually show
\[
\sum_{i = 1}^{s'} \wtil{g}^{i-1} J_i 
= \sum_{i = 1}^{s'} (1 - \wtil{\varphi}^{-1})^{i-1} J_i
\]
by induction on $s'$, for each $0 \leq s' \leq s$.
The case $s' = 0$ is trivial.
For $1 \leq s' \leq s$, we have
\begin{align}
\sum_{i = 1}^{s'} \wtil{g}^{i-1} J_i 
& = \wtil{g}^{{s'}-1} J_{s'}
	+ \sum_{i = 1}^{s' - 1} \wtil{g}^{i-1} J_i \\
& = \parenth{\sum_{i = 1}^{s'} (1 - \wtil{\varphi}^{-1})^{i - 1} (\# I)^{s' - i}} J_{s'}
	+ \sum_{i = 1}^{s' - 1} (1 - \wtil{\varphi}^{-1})^{i-1} J_i.
\end{align}
Here, the second equality follows from the induction hypothesis and expanding the power $\wtil{g}^{s' - 1}$.
By \eqref{eq:J_1}, for $1 \leq i \leq s' - 1$, we have 
$(\# I)^{s' - i} J_{s'}
\subset J_{i}.
$
Therefore, we obtain
\begin{align}
\sum_{i = 1}^{s'} \wtil{g}^{i-1} J_i 
& = (1 - \wtil{\varphi}^{-1})^{s' - 1} J_{s'}
	+ \sum_{i = 1}^{s' - 1} (1 - \wtil{\varphi}^{-1})^{i-1} J_i\\
& = \sum_{i = 1}^{s'} (1 - \wtil{\varphi}^{-1})^{i-1} J_i.
\end{align}
This completes the proof of \eqref{eq:J_2}.

The right hand side of \eqref{eq:J_2} can be computed as
\begin{align}
\sum_{i = 1}^s (1 - \wtil{\varphi}^{-1})^{i-1} J_i
& = \sum_{i = 1}^s \sum_{j=0}^{s-i} Z_{i+j} \II_I^j (1 - \wtil{\varphi}^{-1})^{i-1}\\
& = \sum_{k = 1}^s \sum_{j=0}^{k} Z_{k} \II_I^j (1 - \wtil{\varphi}^{-1})^{k - j -1}\\
%& = \sum_{k = 1}^s Z_{k} (\II_I, 1 - \wtil{\varphi}^{-1})^{k -1}\\
& = \sum_{k = 1}^s Z_{k} \II_D^{k -1}
 = \JJ.
\end{align}
Here, the first equality follows from the definition of $J_i$, the second by putting $i + j = k$, the third by $\II_D = (\II_I, 1 - \wtil{\varphi}^{-1})$, and the final by the definition of $\JJ$.
Then, combining this with \eqref{eq:J_2}, we obtain the formula \eqref{eq:J_vs_Ji}.
This completes the proof of Theorem \ref{Main theorem 2}.
\end{proof}

\section{Stickelberger element and Fitting ideal}\label{sec:Stickel}
%%%%%%%%%%%%%%%%%%%%

In this section, we prove Theorem \ref{Main theorem 3}.
As explained after the statement, we need to compare the ideals in the both sides of \eqref{eq:e11} for each $v$ before taking the product.
That task will be done in \S \ref{subsec:local}, and after that we complete the proof of Theorem \ref{Main theorem 3} in \S \ref{subsec:proof_3}.

In this section we fix an odd prime number $p$ and always work over $\Z_p$.

%%%%%%%%%%%%%%%%%%%%
\subsection{Comparison of ideals}\label{subsec:local}
%%%%%%%%%%%%%%%%%%%%

In this subsection, we again consider the general algebraic situation as in \S \ref{Intro2}.
Our task in this subsection is to compare the two fractional ideals
\[
\cA = h \SFitt{1}_{\Z_p[G]}(A \otimes \Z_p),
\qquad
%\]
%and
%\[
\cB = \parenth{1 - \frac{\nu_{I}}{\# I} \varphi^{-1}}
\]
of $\Z_p[G]$.
In Lemma \ref{lem:e01} (resp. Lemma \ref{lem:e04}) below, we deal with the case where $D$ {\it is not} (resp. {\it is}) a $p$-group.
We will make use of the concrete description of $\cA$ in Theorem \ref{Main theorem 2}. 
As we always work over $\Z_p$ instead of $\Z$, by abuse of notation, in this subsection we simply write $\II_I$, $\II_D$, $Z_i$, and $\JJ$ for the extensions of those ideals from $\Z[G]$ to $\Z_p[G]$.
We have no afraid of confusion due to this.
%Though it is possible to avoid that use in Lemma \ref{lem:e01}, the description is essential in the proof of \ref{lem:e04}.

%\begin{lem}\label{lem:unram}
%If $I$ is trivial, we have 
%\[
%\II_D \cA
%= \cB
%\]
%as ideals of $\Z[G]$
%\end{lem}
%
%\begin{proof}
%By Theorem \ref{Main theorem 2}, we have $\cA = (1)$.
%Moreover, we clearly have $\cB = \II_D$, so the lemma follows.
%\end{proof}

Let $G'$ denote the maximal subgroup of $G$ whose order is prime to $p$.

\begin{lem}\label{lem:e01}
Let $\chi$ be a faithful character of $G'$.
Suppose that $D$ is not a $p$-group.
Then we have
\[
\cA^{\chi} = \cB^{\chi}
\]
as fractional ideals of $\Z_p[G]^{\chi}$.
\end{lem}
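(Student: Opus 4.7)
The plan is to start from Theorem \ref{Main theorem 2}, which gives $\cA = (\nu_I) + \cB \cdot \JJ$ as fractional ideals of $\Z_p[G]$. Passing to $\chi$-parts yields
\[
\cA^\chi = (\nu_I)^\chi + \cB^\chi \cdot \JJ^\chi,
\]
so the lemma will be an immediate consequence of two claims: (1) $\JJ^\chi = (1)$ and (2) $(\nu_I)^\chi \subset \cB^\chi$. Together these give $\cA^\chi = (\nu_I)^\chi + \cB^\chi = \cB^\chi$, while the reverse inclusion $\cB^\chi = \cB^\chi \JJ^\chi \subset \cA^\chi$ is immediate from (1).

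For (1), I would decompose the finite abelian group $G = G_p \times G'$ and correspondingly $D = D_p \times D'$ into $p$-parts and prime-to-$p$ parts. The hypothesis that $D$ is not a $p$-group forces $D' \neq 1$, and the faithfulness of $\chi$ on $G'$ then makes $\chi|_{D'}$ non-trivial. A character-orthogonality argument shows $\Z_p[G/D]^\chi = 0$, equivalently $\II_D^\chi = \Z_p[G]^\chi$. Since $Z_s = (1)$, the ideal $\JJ$ contains $\II_D^{s-1}$, and hence $\JJ^\chi = (1)$. (We may always take the decomposition \eqref{eq:I_decomp} with $s \geq 1$, allowing $I_1$ to be trivial if $I$ itself is.)

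For (2), I would split further according to whether $I' := I \cap G'$ is trivial. If $I' \neq 1$, faithfulness of $\chi$ on $I'$ gives $\nu_{I'}^\chi = 0$ by orthogonality, so $\nu_I^\chi = 0$ and the inclusion is automatic. If $I' = 1$, then $I$ is a $p$-group and the key step is the identity
\[
\left( 1 - \tfrac{\nu_I}{\# I} \varphi^{-1} \right) \nu_I = (1 - \varphi^{-1}) \nu_I \quad \text{in } \Q_p[G],
\]
which follows from $\nu_I^2 = \# I \cdot \nu_I$. Granting this, to conclude $\nu_I^\chi \in \cB^\chi$ it suffices to check that $(1 - \varphi^{-1})^\chi$ is a unit in $\Z_p[G]^\chi$. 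This ring is isomorphic to $\Z_p[G_p] \otimes_{\Z_p} \OO_\chi$, which is local with residue field $\OO_\chi/\pi\OO_\chi$; writing $\varphi = \varphi_p \varphi'$ with $\varphi' \in D'$ a generator, the residue image of $1 - \varphi^{-1}$ equals $1 - \ol{\chi(\varphi')}^{-1}$, and this is non-zero because $\chi(\varphi')$ is a non-trivial root of unity of order prime to $p$ (so its reduction is again a non-trivial root of unity).

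The main obstacle is precisely the subcase $I' = 1$ of claim (2): there $\nu_I^\chi$ does not vanish and one truly has to exhibit it as a multiple of the generator of $\cB^\chi$. The non-triviality of $\chi|_{D'}$, available exactly because $D$ is not a $p$-group, is the essential input making $(1 - \varphi^{-1})^\chi$ invertible; when this fails (i.e.\ when $D$ is a $p$-group), the subsequent Lemma \ref{lem:e04} will evidently require a different and more delicate analysis.
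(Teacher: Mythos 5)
Your proposal is correct and follows essentially the same route as the paper: reduce via Theorem \ref{Main theorem 2} to showing $\JJ^{\chi}=(1)$ and $\nu_I^{\chi}\in\cB^{\chi}$, handle the latter by splitting on whether $I'=I\cap G'$ is trivial, and in the remaining case use $\nu_I^2=\#I\,\nu_I$ together with the invertibility of $(1-\wtil{\varphi}^{-1})^{\chi}$. The only (harmless) difference is that you verify this invertibility by a direct residue-field computation, whereas the paper deduces it from $\II_D^{\chi}=(1)$ and $\II_I^{\chi}\subsetneqq(1)$ in the local ring $\Z_p[G]^{\chi}$.
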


\begin{proof}
We write $D' = D \cap G'$ and $I' = I \cap G'$.
We first note that $\II_D^{\chi} = (1)$.
This is because $\chi$ is non-trivial on $D'$ by the assumptions.
%, taking a non-trivial element $\sigma \in D'$, we have 
%\[
%\II_D^{\chi} \supset (\sigma - 1)^{\chi} = (\chi(\sigma) - 1) = (1).
%\]
Then we have $\JJ^{\chi} = (1)$ by Definition \ref{defn:J}, so Theorem \ref{Main theorem 2} implies
\[
\cA^{\chi}
= \parenth{\nu_I, \parenth{1 - \frac{\nu_I}{\# I} \varphi^{-1} }}^{\chi}.
\]

We have to show 
\[
\nu_{I}^{\chi} \in \parenth{1 - \frac{\nu_{I}}{\# I} \varphi^{-1}}^{\chi}.
\]
When $I'$ is non-trivial, then $\nu_I^{\chi} = 0$ as $\chi$ is non-trivial on $I'$, so this is obvious.
Let us suppose that $I'$ is trivial.
Since $\nu_{I}^2 = (\# I) \nu_{I}$, we have
\[
\nu_{I} \parenth{1 - \frac{\nu_{I}}{\# I} \varphi^{-1}} 
= \nu_{I} \parenth{1 - \varphi^{-1}}.
\]
The element $(1 - \wtil{\varphi}^{-1})^{\chi}$ of $\Z_p[G]^{\chi}$ is a unit 
since $\II_D = (\II_I, 1 - \wtil{\varphi}^{-1})$, $\II_D^{\chi} = (1)$, and $\II_I^{\chi} \subsetneqq (1)$.
This completes the proof.
\end{proof}

\begin{lem}\label{lem:e04}
Suppose that $I$ is non-trivial and that $D$ is a $p$-group.
Let $s = \rank_p(I)$ be the $p$-rank of $I$, that is, the number of minimal generators of $I$ (note that $s \geq 1$).
\begin{itemize}
\item[(1)]
We have
\[
\cA \supset
\II_{D}^{s - 1} \cB
\]
as fractional ideals of $\Z_p[G]$.

\item[(2)]
Let $\psi$ be a character of $G$ such that $\psi|_{G'}$ is faithful on $G'$ and that $\psi$ is non-trivial on $D$.
Then we have
\[
\psi (\cA)
= \psi \parenth{\II_{D}}^{s - 1} \psi (\cB)
\]
as ideals of $\OO_{\psi}$.
%In particular, if $I$ is not cyclic, then $\psi \parenth{h \SFitt{1}_{\Z_p[G]}(A)} \subsetneqq \psi \parenth{\parenth{1 - \frac{\nu_{I}}{\# I} \varphi^{-1}}}$.
\end{itemize}
\end{lem}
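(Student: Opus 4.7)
\textbf{Part (1).} This will be essentially immediate from Theorem \ref{Main theorem 2}. The theorem (valid after base change to $\Z_p$) gives
\[
\cA = \parenth{\nu_I, \parenth{1 - \frac{\nu_I}{\#I}\varphi^{-1}} \JJ}.
\]
The $i = s$ summand of Definition \ref{defn:J}, with $Z_s = (1)$, shows $\JJ \supset \II_D^{s-1}$; multiplying by the generator of $\cB$ yields the claimed inclusion.

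\textbf{Part (2).} The inclusion $\supset$ will follow from (1) by applying $\psi$. For the reverse direction the plan is to work inside the DVR $\OO_\psi$ and compare $p$-adic valuations. A preliminary check is that $\psi(\cB) \neq 0$: if $\psi|_I$ is non-trivial then $\psi(\nu_I) = 0$ and $\psi(\cB) = (1)$, while if $\psi|_I$ is trivial then $\psi(\cB) = (1 - \psi(\varphi)^{-1})$, which is non-zero since $\psi|_D$ is non-trivial. Because $\psi(\cB)$ is thus a non-zero element of a DVR, writing $\psi(\cA) = (\psi(\nu_I)) + \psi(\cB)\,\psi(\JJ)$, the inclusion $\psi(\cA) \subset \psi(\II_D)^{s-1}\psi(\cB)$ reduces to the two estimates
\[
\psi(\nu_I) \in \psi(\II_D)^{s-1}\psi(\cB), \qquad \psi(\JJ) \subset \psi(\II_D)^{s-1}.
\]

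The main step is the second estimate. Using $\JJ = \sum_{i=1}^{s} Z_i \II_D^{i-1}$ and noting that $Z_i$ is generated by products of $s - i$ of the $\nu_l$, this reduces to showing the key containment $\psi(\nu_l) \in \psi(\II_D)$ for each $l$. We have $\psi(\nu_l) \in \{0, \#I_l\}$; the former case is trivial. For the latter, letting $p^n$ denote the order of $\psi|_{G_p}$ and $p^{n_D}$ (with $n_D \geq 1$) the order of $\psi|_D$, a standard cyclotomic computation gives $v(\psi(\II_D)) = p^{n-n_D}$ while $v(\psi(\nu_l)) = a_l\, \phi(p^n)$, where $\#I_l = p^{a_l}$. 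Choosing the decomposition \eqref{eq:I_decomp} to be minimal so that $a_l \geq 1$, the required inequality $a_l\, \phi(p^n) \geq p^{n-n_D}$ reduces to $a_l (p-1) p^{n_D - 1} \geq 1$, which is automatic.

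The first estimate $\psi(\nu_I) \in \psi(\II_D)^{s-1}\psi(\cB)$ is handled similarly: it is vacuous when $\psi|_I$ is non-trivial, and when $\psi|_I$ is trivial one checks $v(\psi(\II_D)) = v(\psi(\cB)) = p^{n-n_D}$, so the claim becomes $v(\#I) = \bigl(\sum_l a_l\bigr) \phi(p^n) \geq s \cdot p^{n-n_D}$, which follows from the same inequality. The main (but modest) obstacle will be organizing this dichotomy on $\psi|_I$; the arithmetic content lies entirely in the cyclotomic estimate $\phi(p^n) \geq p^{n-n_D}$, available precisely because $\psi|_D$ is non-trivial and $p$ is odd.
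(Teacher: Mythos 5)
Your proposal is correct and follows essentially the same route as the paper: part (1) via the $i=s$ term of $\JJ$, and part (2) by reducing to $\psi(\JJ)=\psi(\II_D)^{s-1}$ through the containment $\psi(\nu_l)\in\psi(\II_D)$ (after choosing the decomposition \eqref{eq:I_decomp} minimal) together with the separate treatment of $\psi(\nu_I)$. The only difference is cosmetic: where you compute $\pi$-adic valuations explicitly, the paper gets the same containments from the one-line observation $(\psi(\nu_l))\subset(p)\subset\psi(\II_D)$, valid since $\psi$ is non-trivial on the $p$-group $D$.
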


\begin{proof}
We may take a decomposition \eqref{eq:I_decomp} of $I$ so that $s$ coincides with the $p$-rank of $I$ as the lemma, and then $I_l$ is non-trivial for each $1 \leq l \leq s$.

(1)
By Definition \ref{defn:J}, we have $\II_{D}^{s - 1} \subset \JJ$ (by the $i = s$ term as $Z_s = (1)$).
Then Theorem \ref{Main theorem 2} shows the claim (1).

(2)
%For the proof of (2), it is crucial to observe the elementary fact that $p\OO_{\psi} \subsetneqq (1 - \zeta)\OO_{\psi}$ holds for any non-trivial root of unity $\zeta$ in $\OO_{\psi}$ (recall that $p \geq 3$).
%%This also implies $(1 - \zeta + u)\OO_{\psi} = (1 - \zeta)\OO_{\psi}$ for any $u \in p\OO_{\psi}$.
%We will freely use this observation.
We first show $\psi(\JJ) = \psi(\II_{D})^{s - 1}$.
By the claim (1) above, the inclusion $\psi(\JJ) \supset \psi(\II_{D})^{s - 1}$ holds.
For each $1 \leq l \leq s$, we observe $(\psi(\nu_l)) \subset (p)$ since $\psi(\nu_l)$ is either $0$ or $\# I_l$.
Moreover, we have $(p) \subset \psi(\II_{D})$ since $\psi$ is non-trivial on $D$ and we have $(p) \subset (1 - \zeta)$ if $\zeta$ is any non-trivial root of unity.
These observations imply $\psi(Z_i) \subset \psi(\II_D)^{s - i}$ for $1 \leq i \leq s$.
By the definition of $\JJ$, we then have $\psi(\JJ) \subset \psi(\II_D)^{s - 1}$ as claimed.

%Now we begin the proof of (2).
By Theorem \ref{Main theorem 2} and the above claim, we have
\[
\psi(\cA) = \parenth{\psi(\nu_I), \psi(\cB) \psi(\II_D)^{s - 1}}.
\]
We have to show $\psi(\nu_I) \in \psi(\cB) \psi(\II_D)^{s - 1}$.
This is obvious if $\psi$ is non-trivial on $I$.
If $\psi$ is trivial on $I$, we have
\[
\psi(\nu_I) = \# I \in (p^s) \subset \psi(\cB) \psi(\II_D)^{s - 1},
\]
where the last inclusion follows from $\psi(\cB) = \psi(\II_D) = (1 - \psi(\varphi)^{-1}) \supset (p)$.
This completes the proof of (2).
\end{proof}

%%%%%%%%%%%%%%%%%%%%
\subsection{Proof of Theorem \ref{Main theorem 3}}\label{subsec:proof_3}
%%%%%%%%%%%%%%%%%%%%

Now we consider the setup in \S \ref{Intro3}.
In particular, we fix an odd prime number $p$ and an odd character $\chi$ of $G'$.
Recall the $\chi$-component of the Stickelberger element $\theta_{K/k, T}^{\chi}$ defined as \eqref{eq:defn_theta}

\begin{lem}\label{lem:psi}
We have $\theta_{K/k, T}^{\chi} \neq 0$ if and only if there exists a character $\psi$ of $G$ such that $\psi|_{G'} = \chi$ and that $\psi$ is non-trivial on $D_v$ for any $v \in S_{\chi} \setminus S_{\infty}(k)$.
\end{lem}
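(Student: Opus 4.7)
The plan is to unwind the definition of $\theta_{K/k,T}^{\chi}$ and, after reducing to a statement about individual characters $\psi$ extending $\chi$, to analyze when the coefficient $L_{S_{\chi},T}(0,\psi)$ vanishes.

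First I would pass to $\ol{\Q}_p$-coefficients. The ring $\ol{\Q}_p\otimes_{\Z_p}\Z_p[G]^{\chi}$ decomposes as a product $\prod_{\psi}\ol{\Q}_p$ indexed by the characters $\psi$ of $G$ with $\psi|_{G'}=\chi$, the $\psi$-factor being cut out by the idempotent $e_{\psi^{-1}}$. Under this decomposition, the defining formula \eqref{eq:defn_theta} identifies $\theta_{K/k,T}^{\chi}$ with the tuple $(L_{S_{\chi},T}(0,\psi))_{\psi}$. Hence $\theta_{K/k,T}^{\chi}\neq 0$ is equivalent to the existence of some such $\psi$ with $L_{S_{\chi},T}(0,\psi)\neq 0$.

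Next I would fix such a $\psi$ and examine the three factors of $L_{S_{\chi},T}(0,\psi)$ separately. (i) The primitive value $L(0,\psi)$ is non-zero: since $p$ is odd, the complex conjugation $j$ lies in $G'$, so $\psi(j)=\chi(j)=-1$, meaning $\psi$ is a totally odd one-dimensional character of the totally real field $k$; by the classical non-vanishing of Artin $L$-functions of totally odd characters of totally real fields at $s=0$ (Siegel--Klingen), we have $L(0,\psi)\neq 0$. (ii) The $T$-factor $\prod_{v\in T}(1-\psi(\varphi_v)N(v))$ is non-zero, because $\psi$ is unramified at every $v\in T$ (as $T\cap S_{\ram}(K/k)=\emptyset$), so $|\psi(\varphi_v)|=1$, while $N(v)\geq 2$; hence $\psi(\varphi_v)N(v)\neq 1$. (iii) In the smoothing factor $\prod_{v\in S_{\chi}\setminus S_{\infty}(k)}(1-\psi(\varphi_v))$, we use the standard convention that $\psi(\varphi_v)=0$ whenever $\psi|_{I_v}$ is non-trivial; then the local factor at $v$ vanishes if and only if $\psi|_{I_v}=1$ and $\psi(\varphi_v)=1$, which is precisely the condition that $\psi$ is trivial on $D_v=\langle I_v,\varphi_v\rangle$.

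Combining (i)--(iii), $L_{S_{\chi},T}(0,\psi)\neq 0$ if and only if $\psi|_{D_v}\neq 1$ for every $v\in S_{\chi}\setminus S_{\infty}(k)$. Substituting into the conclusion of the first step yields exactly the equivalence claimed in the lemma. No serious obstacle is anticipated; the two points requiring care are bookkeeping the convention $\psi(\varphi_v)=0$ for $\psi$ ramified at $v$ (so that case (iii) correctly translates into a condition on $D_v$ rather than only on $\varphi_v$), and invoking the non-vanishing of $L(0,\psi)$ for totally odd $\psi$.
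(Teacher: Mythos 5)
Your proposal is correct and follows essentially the same route as the paper: the paper's one-line proof invokes the factorization \eqref{eq:theta} and the fact that $\omega_T^{\chi}$ is a non-zero-divisor (which packages your non-vanishing steps (i) and (ii)), and then analyzes the smoothing factor exactly as in your step (iii), with $\psi\bigl(1 - \frac{\nu_{I_v}}{\# I_v}\varphi_v^{-1}\bigr)$ vanishing precisely when $\psi$ is trivial on $D_v$.
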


\begin{proof}
By \eqref{eq:theta} and the fact that $\omega_T^{\chi}$ is a non-zero-divisor, 
we have $\theta_{K/k, T}^{\chi} \neq 0$ if and only if there exists a character $\psi$ of $G$ such that $\psi|_{G'} = \chi$ and, for every $v \in S_{\chi} \setminus S_{\infty}(k)$, we have $\psi \parenth{1 - \frac{\nu_{I_v}}{\# I_v} \varphi_v^{-1}} \neq 0$. 
The last condition is equivalent to that $\psi$ is non-trivial on $D_v$.
This proves the lemma.
\end{proof}

We begin the proof of Theorem \ref{Main theorem 3}.

\begin{proof}[Proof of Theorem \ref{Main theorem 3}]
As already remarked in the outline of the proof after Theorem \ref{Main theorem 3}, 
we may and do assume that $\chi$ is a faithful character of $G'$.
This is because we have $(\Cl_K^{T} \otimes \Z_p) \otimes_{\Z_p[G]} \Z_p[\Gal(K_{\chi}/k)] \simeq \Cl_{K_{\chi}}^{T} \otimes \Z_p$ as the degree of $K/K_{\chi}$ is prime to $p$.
Moreover, to simplify the notation, we write $S = S_{\chi} = S_{\ram}(K/k)$ and $S_{\fin} = S \setminus S_{\infty}(k)$.
Recall that, by Theorem \ref{Main theorem 1}, the condition (i) is equivalent to \eqref{eq:e11}.
As in \S \ref{subsec:local}, for each $v \in S_{\fin}$, we consider the fractional ideals of $\Z_p[G]$
\[
\cA_v = h_v \SFitt{1}_{\Z_p[G]}(A_v \otimes \Z_p),
\qquad
\cB_v = \parenth{1 - \frac{\nu_{I_v}}{\# I_v} \varphi_v^{-1}}.
\]

We first suppose (ii) and aim at showing (i).
The case where $\theta_{K/k, T}^{\chi} = 0$ is trivial, so we assume that, for any $v \in S_{\fin}$, either (a) or (b) holds.
Then we obtain $\cB_v^{\chi} \subset \cA_v^{\chi}$ for any $v \in S_{\fin}$, by applying Lemma \ref{lem:e01} (resp. Lemma \ref{lem:e04}(1)) if (a) (resp. (b)) holds.
%\[
%\parenth{1 - \frac{\nu_{I_v}}{\# I_v} \varphi_v^{-1}}^{\chi}
%\subset h_v^{\chi} \SFitt{1}_{R_{\chi}}(A_v^{\chi}).
%\]
Thus \eqref{eq:e11} holds.

We now prove that (i) implies (ii).
Suppose that both (i) and the negation of (ii) hold.
Since $\theta_{K/k, T}^{\chi} \neq 0$, we may take a character $\psi$ as in Lemma \ref{lem:psi}.
By applying $\psi$ to \eqref{eq:e11}, we obtain
\[
\prod_{v \in S_{\fin}} \psi (\cB_v)
\subset \prod_{v \in S_{\fin}} \psi (\cA_v).
\]
%\[
%\prod_{v \in S_{\fin}} \psi \parenth{1 - \frac{\nu_{I_v}}{\# I_v} \varphi_v^{-1}}
%\subset \prod_{v \in S_{\fin}} \psi \parenth{h_v \SFitt{1}_{\Z_p[G]}(A_v)}.
%\]
On the other hand, by Lemmas \ref{lem:e01} and \ref{lem:e04}(2), for each $v \in S_{\fin}$, we have
$\psi(\cA_v) \subset \psi(\cB_v)$.
%\[
%\psi \parenth{h_v \SFitt{1}_{\Z_p[G]}(A_v)}
%\subset \psi \parenth{1 - \frac{\nu_{I_v}}{\# I_v} \varphi_v^{-1}}.
%\]
Moreover, the inclusion is proper if and only if both the conditions (a) and (b) in (ii) are false.
Therefore, by the hypothesis that (ii) fails, we obtain
\[
\prod_{v \in S_{\fin}} \psi(\cA_v)
\subsetneqq \prod_{v \in S_{\fin}} \psi(\cB_v).
\]
%\[
%\prod_{v \in S_{\fin}} \psi \parenth{h_v \SFitt{1}_{\Z_p[G]}(A_v)}
%\subsetneqq \prod_{v \in S_{\fin}} \psi \parenth{1 - \frac{\nu_{I_v}}{\# I_v} \varphi_v^{-1}}.
%\]
Thus we get a contradiction.
This completes the proof of Theorem \ref{Main theorem 3}.
\end{proof}

\section*{Acknowledgments}

Both of the authors are sincerely grateful to Masato Kurihara for his continuous support during the research.
They also thank Cornelius Greither for extremely encouraging comments.
The second author is supported by JSPS KAKENHI Grant Number 19J00763.

{
\bibliographystyle{abbrv}
\bibliography{biblio}

\begin{thebibliography}{10}

\bibitem{BKS16}
D.~Burns, M.~Kurihara, and T.~Sano.
\newblock On zeta elements for {$\Bbb G_m$}.
\newblock {\em Doc. Math.}, 21:555--626, 2016.

\bibitem{DK20}
S.~Dasgupta and M.~Kakde.
\newblock On the {B}rumer-{S}tark conjecture.
\newblock {\em preprint, arXiv:2010.00657}, 2020.

\bibitem{Gre07}
C.~Greither.
\newblock Determining {F}itting ideals of minus class groups via the
  equivariant {T}amagawa number conjecture.
\newblock {\em Compos. Math.}, 143(6):1399--1426, 2007.

\bibitem{GK08}
C.~Greither and M.~Kurihara.
\newblock Stickelberger elements, {F}itting ideals of class groups of
  {CM}-fields, and dualisation.
\newblock {\em Math. Z.}, 260(4):905--930, 2008.

\bibitem{GK15}
C.~Greither and M.~Kurihara.
\newblock Tate sequences and {F}itting ideals of {I}wasawa modules.
\newblock {\em Algebra i Analiz}, 27(6):117--149, 2015.

\bibitem{Kata_05}
T.~Kataoka.
\newblock Fitting invariants in equivariant {I}wasawa theory.
\newblock {\em Development of Iwasawa Theory -- the Centennial of K.~Iwasawa's
  Birth}, 2020.

\bibitem{Kur03a}
M.~Kurihara.
\newblock Iwasawa theory and {F}itting ideals.
\newblock {\em J. Reine Angew. Math.}, 561:39--86, 2003.

\bibitem{Kur03b}
M.~Kurihara.
\newblock On the structure of ideal class groups of {CM}-fields.
\newblock Number Extra Vol., pages 539--563. 2003.
\newblock Kazuya Kato's fiftieth birthday.

\bibitem{Kur20}
M.~Kurihara.
\newblock Notes on the dual of the ideal class groups of {CM}-fields.
\newblock {\em preprint, arXiv:2006.05803}, 2020.

\bibitem{KM11}
M.~Kurihara and T.~Miura.
\newblock Stickelberger ideals and {F}itting ideals of class groups for abelian
  number fields.
\newblock {\em Math. Ann.}, 350(3):549--575, 2011.

\bibitem{Nor76}
D.~G. Northcott.
\newblock {\em Finite free resolutions}.
\newblock Cambridge University Press, Cambridge-New York-Melbourne, 1976.
\newblock Cambridge Tracts in Mathematics, No. 71.

\bibitem{RW96}
J.~Ritter and A.~Weiss.
\newblock A {T}ate sequence for global units.
\newblock {\em Compositio Math.}, 102(2):147--178, 1996.

\end{thebibliography}
}

\end{document}